\documentclass[oneeqnum,onetabnum,onefignum,onealgnum]{siamart251216}

\usepackage{amsfonts}
\usepackage{amssymb}
\usepackage{bm}
\usepackage{booktabs}
\usepackage{enumitem}
\usepackage{graphicx}
\graphicspath{{figures/}}
\usepackage{mathtools}
\usepackage{algpseudocode}
\usepackage{tikz}
\usetikzlibrary{arrows.meta,positioning}
\usepackage{float}
\restylefloat{algorithm}

\setlist[itemize]{leftmargin=2em}
\setlist[enumerate]{leftmargin=2.2em}

\numberwithin{theorem}{section}
\theoremstyle{plain}
\theoremheaderfont{\normalfont\itshape}
\theorembodyfont{\normalfont}
\theoremseparator{.}
\theoremsymbol{}
\newtheorem{assumption}{Assumption}[section]
\newtheorem{remark}{Remark}[section]
\crefname{assumption}{Assumption}{Assumptions}
\Crefname{assumption}{Assumption}{Assumptions}
\crefname{corollary}{Corollary}{Corollaries}
\Crefname{corollary}{Corollary}{Corollaries}
\crefname{remark}{Remark}{Remarks}
\Crefname{remark}{Remark}{Remarks}

\newcommand{\cC}{\mathcal{C}}
\newcommand{\cD}{\mathcal{D}}
\newcommand{\cF}{\mathcal{F}}
\newcommand{\cG}{\mathcal{G}}
\newcommand{\cH}{\mathcal{H}}

\newcommand{\cK}{\mathcal{K}}
\newcommand{\cR}{\mathcal{R}}

\newcommand{\cX}{\mathcal{X}}
\newcommand{\bbR}{\mathbb{R}}
\newcommand{\Id}{I}
\newcommand{\norm}[1]{\left\lVert #1 \right\rVert}
\newcommand{\Tail}{\operatorname{Tail}}
\newcommand{\Leak}{\operatorname{Leak}}
\DeclareMathOperator{\spanop}{span}

\newcommand{\TheTitle}{A Fourier-Aware Projection-Based Periodic Parareal Method for Time-Periodic Problems}
\newcommand{\TheAuthors}{C. Tan, Y. Xu, Y. Zhang, and Y. Su}

\headers{Fourier-Aware Projection-Based Parareal}{\TheAuthors}

\title{\TheTitle}
\author{
  Chenyi Tan\thanks{School of Mathematical Sciences, Fudan University,
    Shanghai, China (\email{cytan22@m.fudan.edu.cn},
    \email{xuyc23@m.fudan.edu.cn}, \email{yehaozhang23@m.fudan.edu.cn},
    \email{yfsu@fudan.edu.cn}).}
  \and
  Yuncheng Xu\footnotemark[1]
  \and
  Yehao Zhang\footnotemark[1]
  \and
  Yangfeng Su\footnotemark[1]
}

\begin{document}

\maketitle

\begin{abstract}
Time-periodic problems arise when the desired solution is a periodic steady
state rather than a transient trajectory.  The periodic \emph{parareal} algorithm with
a periodic coarse problem (PP-PC) is a periodicity-preserving parallel-in-time
approach for such problems. Projection-based
correction can accelerate convergence of both \emph{parareal} and PP-PC.  In this paper,
we propose a Fourier-aware construction of projection spaces and a new
correction scheme to further accelerate the convergence of projection-based PP-PC. We develop
a convergence analysis of projection-based PP-PC
with the discrepancy-based correction scheme for general nonlinear
time-periodic problems.  For an arbitrary orthogonal projection, we derive a
local one-step convergence estimate controlled by the
unresolved error and explicit nonlinear contributions.  A temporal
Fourier decomposition bounds the unresolved error by a tail--leak quantity,
which is small when dominant error modes are selected and their coefficients
are captured by the projection space.  For linear problems, the nonlinear
contributions vanish, yielding a globally valid one-step tail--leak convergence
estimate under weaker assumptions.
Experiments on linear and nonlinear problems show that Fourier-aware PP-PC
requires fewer outer iterations than Krylov-enhanced PP-PC.  For the linear
problems, the errors track the tail--leak bound.  For the nonlinear problems,
the experiments quantify the unresolved-error and explicit nonlinear
contributions in the local one-step estimate and show that the evaluated
tail--leak estimate follows the observed decay.
\end{abstract}

\begin{keywords}
time-periodic problems, parallel-in-time integration,
projection-based PP-PC, Fourier-aware projection spaces
\end{keywords}

\begin{MSCcodes}
65L20, 65L70, 65Y05
\end{MSCcodes}

\section{Introduction}
\label{sec:introduction}

Time-periodic problems arise naturally in computational models with periodic
forcing or cyclic operation, where the quantity of interest is a periodic steady
state rather than a transient trajectory.  Examples include eddy-current
simulations, cyclic chemical processes, and fluid--structure
interaction models
\cite{bachinger2006eddyCurrent,hessenthaler2022timePeriodicMgrit,
vanDeRotten2006periodicReactors,vanNoorden2003cyclicProcesses}.  A typical finite-dimensional formulation
seeks a function \(\mathbf{u}:\left[0,T\right]\to\bbR^d\) such that
\begin{equation}
  \label{eq:intro-periodic-problem}
  \mathbf{u}'\left(t\right)
  =
  \mathbf{f}\left(\mathbf{u}\left(t\right),t\right),
  \qquad t\in \left[0,T\right], \qquad
  \mathbf{u}\left(0\right)=\mathbf{u}\left(T\right),
\end{equation}
where \(\mathbf{f}:\bbR^d\times\bbR\to\bbR^d\) satisfies
\(\mathbf f(\mathbf x,t+T)=\mathbf f(\mathbf x,t)\) for all
\((\mathbf x,t)\in\bbR^d\times\bbR\).

The periodic condition in \eqref{eq:intro-periodic-problem} has been treated by
several classes of numerical methods.  Frequency-domain formulations exploit
periodicity directly, from the classical Galerkin analysis of
\cite{urabe1965galerkin} to harmonic-balance methods for periodic flows
\cite{hall2002harmonicBalance}.  Multigrid methods solve the
periodicity-constrained space-time system for time-periodic parabolic problems
\cite{hackbusch1981periodicMultigrid}, and waveform-relaxation methods provide an
iterative alternative \cite{vandewalle1993dynamicIteration}.  Shooting and
Newton--Picard methods compute periodic states as fixed points of the period map
\cite{lust1998newtonPicard,seydel2010bifurcation}.

In large-scale time-domain simulations, spatial parallelism eventually
saturates, leaving sequential time propagation as the bottleneck.  This has motivated parallel-in-time algorithms that introduce
concurrency across time subintervals.  Representative
parallel-in-time approaches include block boundary-value methods, \emph{parareal},
PFASST, multigrid-reduction-in-time, and all-at-once time-domain preconditioning methods
\cite{amodio2009parallelTimeOdes,emmett2012pfasst,
falgout2014parallelTimeIntegration,mcdonald2018allAtOnce,ong2020applications};
see \cite{gander2015fiftyYears} for a survey.
The \emph{parareal} algorithm realizes this idea through an iterative fine--coarse correction.  At
each iteration, the fine propagator is applied independently on the time
subintervals, and the coarse propagator updates the solution values at the coarse time points.  The
original \emph{parareal} method was introduced by Lions, Maday, and
Turinici for initial-value problems \cite{lions2001parareal}; see also
\cite{gander2007parareal} for its convergence analysis.  For time-periodic
problems, the periodic condition must be incorporated into the
\emph{parareal} iteration.  Gander, Jiang, Song, and Zhang introduced and analyzed a
periodic variant \cite{gander2013periodicParareal}: the periodic \emph{parareal} algorithm with a
periodic coarse problem (PP-PC).  PP-PC preserves periodicity on the
coarse time points at every iteration.  However, the PP-PC
iteration may converge slowly for oscillatory or wave-like dynamics.

Projection-based correction has been widely used to improve the convergence of
parallel-in-time methods.  An early time-decomposed framework
for implicit parallel-in-time integration was introduced in
\cite{farhat2003timeDecomposed}.  Projection has also been used to stabilize
\emph{parareal} iterations for first- and second-order hyperbolic systems
\cite{dai2013stable}.  For initial-value problems, the
Krylov-enhanced \emph{parareal} method of \cite{gander2008krylov} improves the
iteration through projection.
The Krylov-enhanced PP-PC method of \cite{song2024krylovpppc} extends the
Krylov subspace correction to PP-PC,
the periodic form of \emph{parareal}, for time-periodic problems.  In these
projection-based \emph{parareal} methods, the projection space determines how
fine and coarse propagation are combined, so its choice affects the
convergence behavior of the iteration.

In this paper, we introduce a Fourier-aware projection-based PP-PC method,
which combines a new correction scheme with a new construction of the
projection space.  The scheme is
discrepancy-based and differs from that of Krylov-enhanced PP-PC.
For the projection space, Krylov-enhanced PP-PC uses the full
solution-snapshot space, built from the computed solution values
over one period.
However, Fourier techniques are widely used to exploit temporal
structure in time-dependent and time-periodic solvers
\cite{gander2023paradiag,kolmbauer2012minres,kulchytska2021multiharmonic,mcdonald2018allAtOnce}.
We develop a Fourier-aware construction of the projection
space that exploits this structure.  The construction represents coarse-time histories in the Fourier
basis associated with the coarse-time index.  A mode-selection rule specifies
which temporal modes to retain.  The corresponding modal components are then
combined into a mixed Fourier space, which serves as the projection space.

We analyze the convergence of projection-based PP-PC
with the discrepancy-based correction scheme for general nonlinear
time-periodic problems.  The starting point is a local one-step error
estimate that holds for a discrepancy-form propagator and an arbitrary
projection space, under local smoothness, nondegeneracy, and smallness
assumptions.  The estimate shows that the error after one
iteration is bounded in terms of the unresolved error and two explicit
nonlinear contributions.  When this unresolved part is represented in temporal Fourier
modes, it is bounded by a tail from modes not selected
together with a leak from selected
modes not fully captured by the projection space.  The tail and the leak are
both small when the selected set contains the error's dominant
temporal modes and the projection space captures their modal coefficients.
Our main result (\cref{thm:fourier-aware-nonlinear}) is a
Fourier-aware one-step estimate in which the tail--leak form
enters the unresolved-error term and the nonlinear remainder, while the
base-state term remains unchanged.
For linear time-periodic problems, the
nonlinear terms vanish; under weaker assumptions, the estimates then
reduce exactly to global bounds with the same constants: a
general-projection estimate and a tail--leak estimate.

The rest of this paper is organized as follows. \Cref{sec:pppc} recalls PP-PC and presents its
projection-based framework.  \Cref{sec:fourier-spaces} introduces
Fourier-aware projection-based PP-PC, its discrepancy-based correction scheme,
and its mixed Fourier space.  \Cref{sec:analysis} develops the local nonlinear
estimate, the Fourier-aware results, and the linear corollaries.
\Cref{sec:numerics} reports experiments on two linear and two nonlinear
time-periodic problems, and \cref{sec:conclusion} concludes the paper.

\section{PP-PC and a framework for its projection-based variants}
\label{sec:pppc}
In this section we first recall PP-PC and present projection-based PP-PC as a
general framework.  Then we show that Krylov-enhanced PP-PC is an instance of this
framework; our own instance is developed in \cref{sec:fourier-spaces}.

\subsection{Periodic \emph{parareal} with a periodic coarse problem}
\label{subsec:periodic-parareal}
We begin by recalling the original PP-PC method.  Let
\[
0=T_0<T_1<\cdots<T_N=T,
\qquad
T_n=n\Delta T,\qquad \Delta T=T/N .
\]
The points \(T_n\) and the intervals \(\left[T_n,T_{n+1}\right]\) are called
the coarse time points and coarse time intervals, respectively.
The solution values at the coarse time points are called the interface
values.  They are vectors in a real finite-dimensional state space, denoted by
\(\bbR^d\).
At iteration \(k\) the interface values are
\[
  \mathbf{U}^k
  =
  \left(\mathbf{U}_0^k,\ldots,\mathbf{U}_{N-1}^k\right)
  \in\left(\bbR^d\right)^N .
\]
The periodic condition identifies the endpoint value with the initial value:
\[
  \mathbf{U}_N^k=\mathbf{U}_0^k .
\]

The PP-PC method is built from a fine propagator and a coarse propagator on each
coarse time interval.
For \(0\le n<N\) and \(\mathbf{x}\in\bbR^d\), let
\[
  \cF\left(T_{n+1},T_n,\mathbf{x}\right),
  \qquad
  \cG\left(T_{n+1},T_n,\mathbf{x}\right)
\]
denote the corresponding fine and coarse propagator values over
\(\left[T_n,T_{n+1}\right]\), respectively, starting from the state \(\mathbf{x}\) at \(T_n\).  More
precisely, if \(\mathbf{u}\left(\cdot;T_n,\mathbf{x}\right)\) denotes the solution of the
underlying evolution problem satisfying
\(\mathbf{u}\left(T_n;T_n,\mathbf{x}\right)=\mathbf{x}\), then
\(\cF\left(T_{n+1},T_n,\mathbf{x}\right)\) gives an accurate approximation of
\(\mathbf{u}\left(T_{n+1};T_n,\mathbf{x}\right)\).  The coarse propagator
\(\cG\left(T_{n+1},T_n,\mathbf{x}\right)\) gives a less accurate but less expensive approximation
of the same state.  When no ambiguity arises, we write
\[
  \cF_n\left(\mathbf{x}\right):=\cF\left(T_{n+1},T_n,\mathbf{x}\right),
  \qquad
  \cG_n\left(\mathbf{x}\right):=\cG\left(T_{n+1},T_n,\mathbf{x}\right).
\]

The PP-PC iteration \cite{gander2013periodicParareal} computes
\(\mathbf{U}^{k+1}\) from \(\mathbf{U}^k\) by
\begin{equation}
  \label{eq:pppc-expanded}
  \begin{aligned}
    \mathbf{U}_0^{k+1}
    &=
    \cF_{N-1}\left(\mathbf{U}_{N-1}^k\right)
    +
    \cG_{N-1}\left(\mathbf{U}_{N-1}^{k+1}\right)
    -
    \cG_{N-1}\left(\mathbf{U}_{N-1}^k\right),\\
    \mathbf{U}_{n+1}^{k+1}
    &=
    \cF_n\left(\mathbf{U}_n^k\right)
    +
    \cG_n\left(\mathbf{U}_n^{k+1}\right)
    -
    \cG_n\left(\mathbf{U}_n^k\right),
    \qquad n=0,\ldots,N-2 .
  \end{aligned}
\end{equation}
Once \(\mathbf U^k\) is known, the fine and coarse propagator evaluations at
\(\mathbf{U}_n^k\) are available.
These fine propagator evaluations are independent across the coarse time
intervals and can therefore be performed in parallel.  The new values
\(\mathbf{U}_n^{k+1}\) remain coupled through the terms
\(\cG_n\left(\mathbf{U}_n^{k+1}\right)\).  The first equation closes the
cycle by coupling \(\mathbf{U}_0^{k+1}\) to
\(\mathbf{U}_{N-1}^{k+1}\).
This coupled solve is the periodic coarse problem in PP-PC.

\subsection{A framework for projection-based PP-PC}
\label{subsec:projection-corrected-pppc}

In PP-PC the correction in \eqref{eq:pppc-expanded} is carried by the coarse
propagator \(\cG_n\).  Projection-based PP-PC generalizes this step.  At
iteration \(k\) it constructs a \emph{projection space}
\(\mathcal V^k\subset\bbR^d\), with orthogonal projection \(P^k\) onto it, and
replaces \(\cG_n\) in the correction by a \emph{projection-based propagator}
\(\cX_{P^k,n}\).  It computes \(\mathbf U^{k+1}\) from the
coupled periodic problem
\begin{equation}
  \label{eq:projection-corrected-pppc}
  \begin{aligned}
    \mathbf{U}_0^{k+1}
    &=
    \cF_{N-1}\left(\mathbf{U}_{N-1}^k\right)
    +
    \cX_{P^k,N-1}\left(\mathbf{U}_{N-1}^{k+1}\right)
    -
    \cX_{P^k,N-1}\left(\mathbf{U}_{N-1}^k\right),\\
    \mathbf{U}_{n+1}^{k+1}
    &=
    \cF_n\left(\mathbf{U}_n^k\right)
    +
    \cX_{P^k,n}\left(\mathbf{U}_n^{k+1}\right)
    -
    \cX_{P^k,n}\left(\mathbf{U}_n^k\right),
    \qquad n=0,\ldots,N-2 .
  \end{aligned}
\end{equation}
Taking \(\cX_{P,n}=\cG_n\) recovers PP-PC \eqref{eq:pppc-expanded}.  The
framework leaves two ingredients to be specified: the projection space
\(\mathcal V^k\) and the projection-based propagator \(\cX_{P^k,n}\).  Different choices give different methods, while the structure of the
iteration is unchanged.  Once \(P^k\) and \(\cX_{P^k,n}\) are fixed, the values
\(\cF_n(\mathbf U_n^k)\) and \(\cX_{P^k,n}(\mathbf U_n^k)\) at the known iterate
are independent across the coarse intervals and are computed in parallel; the
new values \(\mathbf U_n^{k+1}\) remain coupled only through
\(\cX_{P^k,n}(\mathbf U_n^{k+1})\), and the first equation closes the cycle.
\Cref{alg:projection-corrected} summarizes the framework.

\begin{algorithm}[H]
\caption{Framework for projection-based PP-PC}
\label{alg:projection-corrected}
\begin{algorithmic}[1]
\Require Fine propagators \(\cF_n\), coarse propagators \(\cG_n\), initial
interface values \(\mathbf U^0\), a rule that constructs a projection space, and
a projection-based propagator \(\cX_{P,n}\), with \(P\) the orthogonal
projection onto the space.
\For{\(k=0,1,\ldots\)}
  \State\label{line:gen-proj} Compute the orthogonal projection \(P^k\) onto the projection space
  \(\mathcal V^k\) constructed by the rule.
  \State Compute \(\cF_n\left(\mathbf U_n^k\right)\) and
  \(\cX_{P^k,n}\left(\mathbf U_n^k\right)\), \(n=0,\ldots,N-1\), in parallel
  across the coarse intervals.
  \State\label{line:gen-solve} Solve the coupled cyclic system \eqref{eq:projection-corrected-pppc}
  for \(\mathbf U^{k+1}\).
\EndFor
\end{algorithmic}
\end{algorithm}

Krylov-enhanced PP-PC \cite{song2024krylovpppc} is an instance of this
framework.  For initial-value problems, the Krylov-enhanced \emph{parareal} method
\cite{gander2008krylov} accelerates \emph{parareal} by a projection built from solution
snapshots; the periodic version applies the same idea to PP-PC.  Its projection
space is the full solution-snapshot space
\begin{equation}
  \label{eq:krylov-snapshot-space}
  \mathcal V_{\rm K}^k
  :=
  \spanop\left\{\,\mathbf{U}_n^j:\ 0\le j\le k,\ 0\le n<N\,\right\},
\end{equation}
which in the present periodic notation collects all interface values available
up to iteration \(k\).  Let \(P_{\rm K}^k\) be the orthogonal projection onto
\(\mathcal V_{\rm K}^k\) and set \(Q_{\rm K}^k:=\Id-P_{\rm K}^k\), where \(\Id\)
is the identity matrix on \(\bbR^d\).  Its projection-based propagator is the split form
\begin{equation}
  \label{eq:krylov-enhanced-propagator}
  \cK_{{\rm K},n}^k\left(\mathbf{x}\right)
  :=
  \cF_n\left(P_{\rm K}^k\mathbf{x}\right)
  +
  \cG_n\left(Q_{\rm K}^k\mathbf{x}\right),
\end{equation}
which applies the fine propagator on the snapshot space and the coarse
propagator on its complement.  Krylov-enhanced PP-PC is therefore
\cref{alg:projection-corrected} with the projection space of
step~\ref{line:gen-proj} set to the snapshot space \(\mathcal V_{\rm K}^k\) and
the projection-based propagator of step~\ref{line:gen-solve} to the split form
\(\cK_{{\rm K},n}^k\); equivalently, the iteration
\eqref{eq:projection-corrected-pppc} with \(\cX_{P^k,n}=\cK_{{\rm K},n}^k\).

\section{Fourier-aware projection-based PP-PC}
\label{sec:fourier-spaces}

Under the framework of \cref{subsec:projection-corrected-pppc}, our method makes
two new choices: a discrepancy-based correction scheme and a Fourier-aware
projection space.  This correction scheme is realized by the discrepancy-form propagator, the precise object we define first.

\begin{definition}[Discrepancy-form propagator]
\label{def:discrepancy-form-propagator}
Let \(P:\bbR^d\to\bbR^d\) be an arbitrary orthogonal projection.  For
\(0\le n<N\), the fine--coarse discrepancy on the coarse time interval
\(\left[T_n,T_{n+1}\right]\) is defined by
\begin{equation}
  \label{eq:discrepancy-map}
  \cD_n\left(\mathbf{x}\right):=\cF_n\left(\mathbf{x}\right)-\cG_n\left(\mathbf{x}\right),
\end{equation}
and the discrepancy-form propagator associated with \(P\) is defined by
\begin{equation}
  \label{eq:discrepancy-form-propagator}
  \cH_{P,n}\left(\mathbf{x}\right)
  :=
  \cG_n\left(\mathbf{x}\right)+\cD_n\left(P\mathbf{x}\right)
  =
  \cG_n\left(\mathbf{x}\right)
  +\cF_n\left(P\mathbf{x}\right)
  -\cG_n\left(P\mathbf{x}\right).
\end{equation}
\end{definition}

The propagator \(\cH_{P,n}\) is not a coarse propagator.  It adds to the coarse
value \(\cG_n(\mathbf x)\) the fine--coarse discrepancy evaluated at the
projected state \(P\mathbf x\); projection-based PP-PC with this propagator
is \eqref{eq:projection-corrected-pppc} with \(\cX_{P^k,n}=\cH_{P^k,n}\).

The remaining choice is the projection space, the central algorithmic issue in
projection-based PP-PC.  Because the discrepancy-form propagator applies
fine--coarse information only to the projected component, a projection space is
useful to the extent that it captures the components that the iteration
still has to correct.  For time-periodic problems these components have temporal
structure that the discrete Fourier transform (DFT) in the coarse-time index
makes explicit.

At iteration \(k\), the interface values
\[
  \mathbf U^k
  =
  \left(\mathbf U_0^k,\ldots,\mathbf U_{N-1}^k\right)
\]
form the solution history over one period.  Its temporal Fourier coefficients
are
\begin{equation}
  \label{eq:temporal-fourier-coefficients}
  \widehat{\mathbf U}_\ell^k
  =
  \frac{1}{\sqrt N}
  \sum_{n=0}^{N-1}
  \mathbf U_n^k
  \exp\left(-2\pi \imath\ell n/N\right),
  \qquad
  \ell=0,\ldots,N-1.
\end{equation}
The same transform applies to any coarse-time history.
Here \(\imath\) is the imaginary unit.  The index \(\ell\) represents a temporal
Fourier mode, with phase increment \(2\pi\ell/N\) on the coarse time grid.  For
low positive indices the corresponding physical angular frequency is
\(2\pi\ell/T\); indices near \(N\) represent the negative frequencies
\(2\pi(\ell-N)/T\).  The coefficient \(\widehat{\mathbf U}_\ell^k\) is the content of the solution
history in that mode.  For notational simplicity we identify a mode with its
index \(\ell\), so that a set of temporal Fourier modes is a subset of
\(\{0,\ldots,N-1\}\).

The mode \(\ell=0\) is the mean component over the period.  Modes with
small positive indices describe slowly varying components on the cyclic coarse
time grid.  Thus the coefficients \(\widehat{\mathbf U}_\ell^k\) describe how
the solution history is distributed among temporal frequencies of the
periodic iteration.  Truncated Fourier representations have
proved effective in several time-periodic applications
\cite{bachinger2006eddyCurrent,hall2002harmonicBalance}.  The construction
exploits temporal concentration when it is present: a mode-selection rule first
chooses the temporally relevant modes (\cref{subsec:mode-selection}), and the
projection space is then built from the modal components of the solution and
the fine--coarse discrepancy histories at the selected modes
(\cref{subsec:mixed-fourier-space}).

\subsection{Selection of temporal modes}
\label{subsec:mode-selection}

The Fourier-aware construction first chooses a selected temporal mode set.
At iteration \(k\), the selected temporal mode set is denoted by
\[
  \mathcal I^k\subset\left\{0,\ldots,N-1\right\}.
\]
When only a few temporal
modes carry significant content, the construction aims to retain those modes.
Convergence may degrade if the selected set omits a mode carrying a significant
part of the current error.
The two choices below correspond to two practical regimes:
\begin{itemize}
\item The fixed choice applies when the dominant temporal frequencies of the
response are known in advance.  It then uses the same prescribed mode set at
every iteration,
\[
  \mathcal I^k=\mathcal I_{\mathrm{fix}}\quad\text{for all } k,
  \qquad
  \mathcal I_{\mathrm{fix}}\subset\left\{0,\ldots,N-1\right\},
\]
with \(\mathcal I_{\mathrm{fix}}\) the modes at those frequencies.  It is
appropriate
when the periodic response is dominated by low temporal frequencies, for which we
take \(\mathcal I_{\mathrm{fix}}\) to be the symmetric low-frequency band
\begin{equation}
  \label{eq:low-frequency-band}
  \mathcal I_L
  =
  \left\{0,1,\ldots,L\right\}\cup\left\{N-L,\ldots,N-1\right\},
  \qquad 1\le L<N/2 .
\end{equation}
This band retains the mean mode and the first \(L\) positive temporal frequencies
together with their negative-frequency counterparts.
\item The adaptive increment-energy choice applies when the dominant temporal
frequencies are not known in advance, or change during the iteration.  It uses the
current iteration increment.  For \(k\ge 1\), set
\[
  \Delta \mathbf U_n^k
  =
  \mathbf U_n^k-\mathbf U_n^{k-1}.
\]
Let \(\widehat{\Delta\mathbf U}_\ell^k\) be its temporal Fourier coefficients
and define
\begin{equation}
  \label{eq:increment-energy}
  a_\ell^k
  =
  \left\|\widehat{\Delta\mathbf U}_\ell^k\right\|_2^2,
  \qquad
  \ell=0,\ldots,N-1.
\end{equation}
Since the histories are real, \(a_\ell^k=a_{(N-\ell)\bmod N}^k\), so the
energies are properties of the conjugate groups
\(\{\ell,(N-\ell)\bmod N\}\).  The adaptive rule ranks these groups by their
combined energy and retains the \(M\) highest-ranked groups, where \(M\) is a
prescribed number.  The mode \(\ell=0\) is self-conjugate, as is \(\ell=N/2\) when \(N\) is even, so the
selected set \(\mathcal I^k\) contains at most \(2M\) modes.  For \(k=0\) no increment is available; we set
\(\mathcal I^0=\emptyset\), so the first update reduces to the PP-PC step.
The increment estimates the iteration error: for a contracting iteration,
\(\Delta\mathbf U^k\) is dominated by the error still to be removed, so its
leading temporal modes follow those of the error.  This link is
heuristic: for a noncontracting or oscillatory iteration the temporal spectrum of
\(\Delta\mathbf U^k\) need not match that of the error, and the adequacy of the
selection is assessed a posteriori in \cref{sec:numerics}.
\end{itemize}

\subsection{Construction of the Fourier-aware projection space}
\label{subsec:mixed-fourier-space}
A selected mode affects the correction only through its modal components in
the projection space.  The construction of the projection
space is guided by the role of \(P\) in
the projection-based PP-PC update \eqref{eq:projection-corrected-pppc}
and in the discrepancy-form propagator
\eqref{eq:discrepancy-form-propagator}.
For an orthogonal projection \(P\) and the current solution history \(\mathbf U^k\),
define the correction source
\[
  \mathbf c_{n+1}^k\left(P\right)
  :=
  \cF_n\left(\mathbf U_n^k\right)
  -
  \cH_{P,n}\left(\mathbf U_n^k\right),
  \qquad n=0,\ldots,N-1 .
\]
By the definition of \(\cH_{P,n}\), this source is
\[
  \mathbf c_{n+1}^k\left(P\right)
  =
  \cD_n\left(\mathbf U_n^k\right)
  -
  \cD_n\left(P\mathbf U_n^k\right).
\]
With the discrepancy-form propagator, the projection-based PP-PC step is
the cyclic system---its operator matrix understood rowwise, as made precise
below---
\begin{equation}
  \label{eq:cyclic-system}
  \begin{bmatrix}
    \Id & 0 & \cdots & -\cH_{P,N-1}\left(\cdot\right)\\
    -\cH_{P,0}\left(\cdot\right) & \Id & & 0\\
    \vdots & \ddots & \ddots & \vdots\\
    0 & \cdots & -\cH_{P,N-2}\left(\cdot\right) & \Id
  \end{bmatrix}
  \begin{bmatrix}
    \mathbf U_0^{k+1}\\
    \mathbf U_1^{k+1}\\
    \vdots\\
    \mathbf U_{N-1}^{k+1}
  \end{bmatrix}
  =
  \begin{bmatrix}
    \mathbf c_N^k\left(P\right)\\
    \mathbf c_1^k\left(P\right)\\
    \vdots\\
    \mathbf c_{N-1}^k\left(P\right)
  \end{bmatrix}.
\end{equation}
We refer to the operator on the left-hand side as the \emph{cyclic operator}
associated with \(\cH_{P,n}\), and denote it by \(\cC_P\).  Throughout, indices
of histories are read modulo \(N\), so that \(\cC_P\) acts rowwise as
\(\left(\cC_P\mathbf w\right)_{n+1}=\mathbf w_{n+1}-\cH_{P,n}(\mathbf w_n)\),
\(n=0,\ldots,N-1\).
The projection
acts at the states where the discrepancy-form propagator is evaluated and in
the correction source \(\mathbf c_{n+1}^k\left(P\right)\); the projection
space should therefore be informed by both the solution and the fine--coarse
discrepancy histories:
\begin{itemize}
\item The solution histories contain those evaluation states
	\(\mathbf U_n^m\); their selected modal components
	identify state-space directions associated with the retained
	temporal content of the periodic iterates.
\item The discrepancy histories contain the correction directions
\(\mathbf d_n^m=\cD_n\left(\mathbf U_n^m\right)\); since
\(\mathbf c_{n+1}^k\left(P\right)\) compares
\(\cD_n\left(\mathbf U_n^k\right)\) with
\(\cD_n\left(P\mathbf U_n^k\right)\), they identify the discrepancy
components the projection should capture.
\end{itemize}
The projection space is therefore built from both histories.  The
one-step estimate in \cref{sec:analysis} contains a projection-dependent source
contribution obtained by applying a local fine--coarse discrepancy bound to the
unresolved part of the current error.  A cyclic stability factor also enters
the estimate, together with two additional contributions in the nonlinear case.

For each \(0\le m\le k\) and \(\ell=0,\ldots,N-1\), the solution history
\(\mathbf U^m\) and the fine--coarse discrepancy history \(\mathbf d^m\) have
temporal Fourier coefficients
\begin{equation}
  \label{eq:solution-discrepancy-dft}
  \widehat{\mathbf U}_\ell^m
  =
  \frac{1}{\sqrt N}\sum_{n=0}^{N-1}\mathbf U_n^m
  \exp\left(-2\pi\imath\ell n/N\right),
  \quad
  \widehat{\mathbf d}_\ell^m
  =
  \frac{1}{\sqrt N}\sum_{n=0}^{N-1}\mathbf d_n^m
  \exp\left(-2\pi\imath\ell n/N\right).
\end{equation}
These coefficients are complex in general, whereas the projection space is a
subspace of \(\bbR^d\); we therefore take their real and imaginary parts,
omitting any that vanish.
The selected modal components of the solution history span the
solution-generated Fourier space
\begin{equation}
  \label{eq:solution-generated-fourier-space}
  \mathcal V_{\rm sol}^k\left(\mathcal I^k\right)
  :=
  \spanop\left\{
  \operatorname{Re}\widehat{\mathbf U}_\ell^m,
  \operatorname{Im}\widehat{\mathbf U}_\ell^m:
  \ell\in\mathcal I^k,\ 0\le m\le k
  \right\}
\end{equation}
and those of the fine--coarse discrepancy history span the
discrepancy-generated Fourier space
\begin{equation}
  \label{eq:discrepancy-generated-fourier-space}
  \mathcal V_{\rm disc}^k\left(\mathcal I^k\right)
  :=
  \spanop\left\{
  \operatorname{Re}\widehat{\mathbf d}_\ell^m,
  \operatorname{Im}\widehat{\mathbf d}_\ell^m:
  \ell\in\mathcal I^k,\ 0\le m\le k
  \right\}.
\end{equation}
The Fourier-aware projection space is their sum, the mixed Fourier space
\begin{equation}
  \label{eq:mixed-fourier-projection-space}
  \mathcal V_{\rm mix}^k\left(\mathcal I^k\right)
  :=
  \mathcal V_{\rm sol}^k\left(\mathcal I^k\right)
  +
  \mathcal V_{\rm disc}^k\left(\mathcal I^k\right).
\end{equation}
It is the smallest subspace containing both, so a single projection \(P\) onto
\(\mathcal V_{\rm mix}^k\left(\mathcal I^k\right)\) captures the selected modal components of the evaluation states and of the discrepancy directions entering the correction source.

\begin{remark}
	\label{rem:krylov-all-modes}
	Since the DFT \eqref{eq:temporal-fourier-coefficients} is an
	invertible linear transform in the coarse-time index and the histories are
	real, the real span of
	\(\{\operatorname{Re}\widehat{\mathbf U}_\ell^m,
	\operatorname{Im}\widehat{\mathbf U}_\ell^m:0\le\ell<N\}\) equals the span
	of \(\{\mathbf U_n^m:0\le n<N\}\) for each \(m\).  With all temporal modes
	selected, the solution-generated Fourier space is therefore exactly the
	full solution-snapshot space:
	\(\mathcal V_{\rm sol}^k(\{0,\ldots,N-1\})=\mathcal V_{\rm K}^k\).
\end{remark}

\subsection{The Fourier-aware projection-based PP-PC algorithm}
\label{subsec:fourier-aware-iteration}

We now combine these ingredients into Fourier-aware projection-based
PP-PC.  At each iteration \(k\), choose the selected temporal mode set
\(\mathcal I^k\) by one of the rules in \cref{subsec:mode-selection}.  The
projection space is the mixed Fourier space
\(\mathcal V_{\rm mix}^k\left(\mathcal I^k\right)\) of
\eqref{eq:mixed-fourier-projection-space}, and \(P_{\mathcal I^k}\) is
the orthogonal projection onto it.  The Fourier-aware
projection-based PP-PC update is the periodic solve
\eqref{eq:projection-corrected-pppc} with the discrepancy-form propagator and
\(P^k=P_{\mathcal I^k}\), namely
\begin{equation}
  \label{eq:fourier-aware-pppc}
  \begin{aligned}
    \mathbf U_0^{k+1}
    &=
    \cF_{N-1}\left(\mathbf U_{N-1}^k\right)
    +
    \cH_{P_{\mathcal I^k},N-1}\left(\mathbf U_{N-1}^{k+1}\right)
    -
    \cH_{P_{\mathcal I^k},N-1}\left(\mathbf U_{N-1}^k\right),\\
    \mathbf U_{n+1}^{k+1}
    &=
    \cF_n\left(\mathbf U_n^k\right)
    +
    \cH_{P_{\mathcal I^k},n}\left(\mathbf U_n^{k+1}\right)
    -
    \cH_{P_{\mathcal I^k},n}\left(\mathbf U_n^k\right),
    \qquad n=0,\ldots,N-2 .
  \end{aligned}
\end{equation}
\Cref{alg:fourier-aware-pppc} summarizes the procedure.  Its
main parameter is the band width \(L\) of the fixed rule or the mode count \(M\)
of the adaptive rule; the experimental values are given in
\cref{sec:numerics}.  In an implementation, past Fourier coefficients are
stored, so only those of the newly generated solution and fine--coarse
discrepancy histories are computed at iteration \(k\).

\begin{algorithm}[t!]
\caption{Fourier-aware projection-based PP-PC}
\label{alg:fourier-aware-pppc}
\begin{algorithmic}[1]
\Require Fine propagators \(\cF_n\), coarse propagators \(\cG_n\), initial
interface values \(\mathbf U^0\).
\For{\(k=0,1,\ldots\)}
  \State For \(n=0{:}N{-}1\), compute the fine values \(\cF_n\left(\mathbf U_n^k\right)\)
  and the coarse values \(\cG_n\left(\mathbf U_n^k\right)\) in parallel across the
  coarse intervals.
  \State\label{line:disc-dft}Compute the fine--coarse discrepancy
  \(\mathbf d_n^k=\cD_n\left(\mathbf U_n^k\right)\) by \eqref{eq:discrepancy-map}
  and the new temporal Fourier coefficients
  \(\widehat{\mathbf U}_\ell^k\) and \(\widehat{\mathbf d}_\ell^k\) by
  \eqref{eq:solution-discrepancy-dft}; reuse the stored coefficients for \(m<k\).
  \State\label{line:modes}Choose the selected temporal mode set \(\mathcal I^k\) by the
  fixed rule \eqref{eq:low-frequency-band} or the adaptive rule
  \eqref{eq:increment-energy}.
  \State\label{line:proj}Compute the orthogonal projection \(P_{\mathcal I^k}\) onto the mixed
  Fourier space \(\mathcal V_{\rm mix}^k\left(\mathcal I^k\right)\)
  \eqref{eq:mixed-fourier-projection-space}.
  \State For \(n=0{:}N{-}1\), compute \(\cH_{P_{\mathcal I^k},n}\left(\mathbf U_n^k\right)\)
  in parallel across the coarse intervals.
  \State\label{line:solve}Solve the coupled cyclic system
  \eqref{eq:fourier-aware-pppc} for \(\mathbf U^{k+1}\).
\EndFor
\end{algorithmic}
\end{algorithm}

The discrepancy-form propagator
\(\cH_{P,n}(\mathbf x)=\cG_n(\mathbf x)+\cD_n(P\mathbf x)\) evaluates the fine
propagator at the projected state \(P\mathbf x\).  In the corrected update
\eqref{eq:fourier-aware-pppc} this evaluation falls on the next iterate
\(\mathbf U^{k+1}\), the unknown of the cyclic system \eqref{eq:cyclic-system},
so the cost of the correction depends on how that system is solved.
For affine \(\cF_n\) and \(\cG_n\) the cyclic system is
linear.  Its operator is assembled from the fine propagator applied to the
projection basis, and the correction is a single linear solve with no further
fine propagations; the fine propagator is otherwise evaluated only at the current
iterate \(\mathbf U^k\), in parallel across the coarse intervals as in the
original PP-PC.

\begin{remark}[Distinction from Krylov-enhanced PP-PC]
\label{rem:krylov-distinction}
Fourier-aware PP-PC and Krylov-enhanced PP-PC are two instances of the
projection-based framework \cref{alg:projection-corrected}; they fix its two
free ingredients---the projection space of step~\ref{line:gen-proj} and the
projection-based propagator of step~\ref{line:gen-solve}---differently.
For the projection space, Fourier-aware PP-PC builds the mixed Fourier space
\(\mathcal V_{\rm mix}^k\left(\mathcal I^k\right)\)
\eqref{eq:mixed-fourier-projection-space}, whereas Krylov-enhanced
PP-PC uses the full solution-snapshot space \(\mathcal V_{\rm K}^k\)
\eqref{eq:krylov-snapshot-space}.
For the propagator, the difference is more fundamental and independent of the
space choice: Krylov-enhanced PP-PC uses the split-form propagator
\[
  \cF_n\left(P\mathbf{x}\right)
  +
  \cG_n\left(\left(\Id-P\right)\mathbf{x}\right),
\]
whereas Fourier-aware PP-PC uses the
discrepancy-form propagator
\[
  \cH_{P,n}\left(\mathbf{x}\right)
  =
  \cG_n\left(\mathbf{x}\right)
  +
  \left(\cF_n-\cG_n\right)\left(P\mathbf{x}\right).
\]
In the affine case the
two forms differ only by a constant: if \(\cG_n\) is affine, then for every
\(\mathbf x\)
\[
\cH_{P,n}(\mathbf x)-\cF_n(P\mathbf x)-\cG_n\left(\left(\Id-P\right)\mathbf x\right)
=
\cG_n(\mathbf x)-\cG_n(P\mathbf x)-\cG_n\left(\left(\Id-P\right)\mathbf x\right)
=
-\cG_n(\mathbf 0),
\]
and this constant cancels between the two evaluations of the propagator in the
update \eqref{eq:fourier-aware-pppc}, so the corresponding iterations produce
identical iterates; the distinction is therefore active only in the nonlinear
regime.
\end{remark}

\section{Error and convergence analysis}
\label{sec:analysis}

This section analyzes the error and convergence of projection-based PP-PC
with our discrepancy-based correction scheme, realized by the discrepancy-form propagator \eqref{eq:discrepancy-form-propagator}.  The analysis considers the general nonlinear time-periodic problem
\begin{equation}
	\label{eq:nonlinear-periodic-problem}
	\mathbf u'(t)=\mathbf f(\mathbf u(t),t),
	\qquad t\in[0,T],
	\qquad \mathbf u(0)=\mathbf u(T),
\end{equation}
where \(\mathbf f(\mathbf x,t+T)=\mathbf f(\mathbf x,t)\) for all
\(\mathbf x\) in a domain containing the states considered below.

Throughout, \(\mathrm D\) denotes the Jacobian of a map with respect to the
state, the subscript \(\mathbf u\) marking the partial Jacobian of
\(\mathbf f(\mathbf u,t)\) in its state argument; \(\mathbf f\) and
\(\mathrm D_{\mathbf u}\mathbf f\) are assumed continuous in
\((\mathbf u,t)\).  Let \(\mathbf u^\star_{\mathrm c}\) denote a
\(T\)-periodic solution, assumed to exist; the subscript marks the
continuous-time solution.  Linearizing \eqref{eq:nonlinear-periodic-problem}
about \(\mathbf u^\star_{\mathrm c}\) gives the \emph{variational equation}
\begin{equation}
	\label{eq:floquet-variational}
	\dot{\mathbf v}=A_\star(t)\,\mathbf v,
	\qquad
	A_\star(t):=\mathrm D_{\mathbf u}\mathbf f\!\left(\mathbf u^\star_{\mathrm c}(t),t\right),
\end{equation}
a linear system with \(T\)-periodic coefficients.  Let its fundamental matrix
\(\Phi\) satisfy \(\dot{\Phi}=A_\star\Phi\) and \(\Phi(0)=\Id\).  Then
\(\mathbf v(T)=\Phi(T)\mathbf v(0)\), where \(\Phi(T)\) is the
\emph{monodromy matrix}, whose eigenvalues are the \emph{Floquet multipliers}
\cite{chicone2006ode,hartman2002ode,seydel2010bifurcation}.

\begin{assumption}[Nondegenerate periodic solution]
	\label{ass:nondegeneracy}
	The periodic solution \(\mathbf u^\star_{\mathrm c}\) is \emph{nondegenerate}: no
	Floquet multiplier equals \(1\), equivalently the variational equation has no
	nontrivial \(T\)-periodic solution.
\end{assumption}

Define the period map
\(\varphi_T(\mathbf x):=\mathbf u(T;\mathbf x)\), where
\(\mathbf u(0;\mathbf x)=\mathbf x\).  Since
\(\mathrm D\varphi_T(\mathbf u^\star_{\mathrm c}(0))=\Phi(T)\) and
\(\Id-\Phi(T)\) is invertible by \Cref{ass:nondegeneracy}, the implicit
function theorem implies that \(\mathbf u^\star_{\mathrm c}(0)\) is locally
unique among the fixed points of \(\varphi_T\).  If the fine period map
\(\Phi_{\cF}:=\cF_{N-1}\circ\cdots\circ\cF_0\) converges to \(\varphi_T\) in
\(C^1\) near \(\mathbf u^\star_{\mathrm c}(0)\), then for sufficiently small
fine steps \(\Phi_{\cF}\) has a locally unique fixed point
\(\mathbf u_0^\star\) near \(\mathbf u^\star_{\mathrm c}(0)\).  This fixed
point is nondegenerate:
\(\Id-\mathrm D\Phi_{\cF}(\mathbf u_0^\star)\) is invertible.
We thus define the fine periodic reference
\begin{equation}
\label{eq:fine-periodic-reference}
\mathbf u^\star
=
\left(\mathbf u_0^\star,\ldots,\mathbf u_{N-1}^\star\right)
\in
\left(\bbR^d\right)^N,
\end{equation}
by the fine-propagator interface equations
\begin{equation}
\label{eq:fine-periodic-reference-interface}
\mathbf u_{n+1}^\star=\cF_n\left(\mathbf u_n^\star\right),
\qquad n=0,\ldots,N-1,
\qquad \mathbf u_N^\star=\mathbf u_0^\star .
\end{equation}
The preceding argument gives a sufficient condition for the
existence of this discrete reference.  The estimates below assume its existence
and measure the iteration error of \(\mathbf U^k\) relative to it.
The error at the coarse time points is
\[
\mathbf e_n^k:=\mathbf u_n^\star-\mathbf U_n^k,
\qquad n=0,\ldots,N-1,
\]
its maximum-in-time size is
\[
E_\infty^k:=\max_{0\le n<N}\norm{\mathbf e_n^k}_2 ,
\]
and the error history is
\[
\mathbf e^k:=\left(\mathbf e_0^k,\ldots,\mathbf e_{N-1}^k\right)
\in\left(\bbR^d\right)^N .
\]
On histories \(\mathbf w=(\mathbf w_0,\ldots,\mathbf w_{N-1})\in(\bbR^d)^N\) we use the
\emph{maximum-in-time norm} and the operator norm it induces,
\[
\norm{\mathbf w}_\infty:=\max_{0\le n<N}\norm{\mathbf w_n}_2 ,
\qquad
\norm{S}_{\infty\to\infty}:=\sup_{\mathbf w\neq\mathbf 0}\frac{\norm{S\mathbf w}_\infty}{\norm{\mathbf w}_\infty} ,
\]
so that \(E_\infty^k=\norm{\mathbf e^k}_\infty\).
History indices are taken modulo \(N\); in particular,
\(\mathbf U_N^{k+1}=\mathbf U_0^{k+1}\),
\(\mathbf u_N^\star=\mathbf u_0^\star\), and
\(\mathbf e_N^{k+1}=\mathbf e_0^{k+1}\).  For one projection-based PP-PC
update, fix an arbitrary projection space \(\mathcal V\subset\bbR^d\), let
\(P\) be its orthogonal projection, and set \(Q:=\Id-P\).  We call
\(Q\mathbf w=(Q\mathbf w_0,\ldots,Q\mathbf w_{N-1})\in(\bbR^d)^N\)
the \emph{unresolved part} of \(\mathbf w\).  The local one-step estimate
uses three assumptions.

\begin{assumption}[Local nonlinear region]
	\label{ass:local-nonlinear-neighborhood}
	Fix an iteration index \(k\) and write \(P:=P^k\).  There
	exists a convex set
	\(\mathcal B_k\subset\bbR^d\) such that, for all
	\(n=0,\ldots,N-1\) and \(s\in[0,1]\),
	\[
	\mathbf U_n^k+s\mathbf e_n^k\in\mathcal B_k,
	\qquad
	P(\mathbf U_n^k+s\mathbf e_n^k)\in\mathcal B_k,
	\qquad
	\mathbf u_n^\star,\mathbf U_n^{k+1}\in\mathcal B_k .
	\]
	We also assume that \(\cH_{P,n}\) is well defined on
	\(\mathcal B_k\).
\end{assumption}

\begin{assumption}[Local nondegeneracy]
	\label{ass:local-nonlinear-nondegeneracy}
	Under \Cref{ass:local-nonlinear-neighborhood}, each \(\cH_{P^k,n}\) is
	continuously differentiable on an open neighborhood of
	\(\mathcal B_k\).  The \emph{linearized cyclic operator}
	\(\widehat{\cC}_{P^k}\), obtained by linearizing the cyclic operator
	\(\cC_{P^k}\) of \eqref{eq:cyclic-system} at \(\mathbf u^\star\), acts on a
	periodic history \(\mathbf w\) by
	\[
	\left(\widehat{\cC}_{P^k}\mathbf w\right)_{n+1}
	:=
	\mathbf w_{n+1}
	-
	\mathrm D\cH_{P^k,n}(\mathbf u_n^\star)\mathbf w_n,
	\quad n=0,\ldots,N-1,
	\quad \mathbf w_N=\mathbf w_0 .
	\]
	The matrix
	\[
	\widehat M_{P^k}:=\mathrm D\cH_{P^k,N-1}(\mathbf u_{N-1}^\star)\cdots
	\mathrm D\cH_{P^k,0}(\mathbf u_0^\star)
	\]
	is the monodromy of the linearized cyclic system.  We
	assume this linearization is \emph{nondegenerate}:
	\(1\notin\sigma(\widehat M_{P^k})\).  By the cyclic recurrence, this is
	equivalent to the invertibility of \(\widehat{\cC}_{P^k}\).  We set
	\begin{equation}
		\label{eq:nonlinear-Gamma-definition}
		\widehat\Gamma_k:=\norm{\widehat{\cC}_{P^k}^{-1}}_{\infty\to\infty}.
	\end{equation}
	This condition concerns the linearization of \(\cH_{P^k,n}\) at the fine
	reference and is not implied by \Cref{ass:nondegeneracy}.
\end{assumption}

\begin{assumption}[Local smoothness]
	\label{ass:local-discrepancy-smoothness}
	Under the setting of \Cref{ass:local-nonlinear-neighborhood}, suppose that
	each \(\cD_n\) is continuously Fr\'echet differentiable on an open
	neighborhood of \(\mathcal B_k\).  There exist constants
	\(\mu_k\ge 0\) and \(\nu_k\ge 0\) such that, for all
	\(\mathbf x,\mathbf y\in\mathcal B_k\) and \(n=0,\ldots,N-1\),
	\begin{equation}
		\label{eq:nonlinear-discrepancy-smoothness}
		\norm{\mathrm D\cD_n(\mathbf x)}_2\le \mu_k,
		\qquad
		\norm{\mathrm D\cD_n(\mathbf x)-\mathrm D\cD_n(\mathbf y)}_2
		\le
		\nu_k\norm{\mathbf x-\mathbf y}_2 .
	\end{equation}
	Suppose in addition that the projection-based propagators \(\cH_{P,n}\) have Lipschitz
	Jacobians on \(\mathcal B_k\): for some
	constant \(\kappa_k\ge0\),
	\begin{equation}
		\label{eq:nonlinear-corrected-curvature}
		\norm{\mathrm D\cH_{P,n}(\mathbf x)-\mathrm D\cH_{P,n}(\mathbf y)}_2
		\le
		\kappa_k\norm{\mathbf x-\mathbf y}_2,
		\qquad \mathbf x,\mathbf y\in\mathcal B_k .
	\end{equation}
	The constants \(\mu_k\), \(\nu_k\), \(\kappa_k\) are associated with the local
	set \(\mathcal B_k\), which itself depends on the iterate and the projection
	through \Cref{ass:local-nonlinear-neighborhood}.
\end{assumption}

The following additional assumption is used only in the
contractive special case discussed in
\Cref{rem:contraction-recovers-nondegeneracy}.

\begin{assumption}[Coarse-propagator contraction]
	\label{ass:propagator-contraction}
	There exists a constant \(L_G>0\), independent of the coarse time interval
	index \(n\) and of \(\Delta T\), such that the coarse propagator
	satisfies the uniform contraction bound
	\[
	\norm{\cG_n\left(\mathbf x\right)-\cG_n\left(\mathbf y\right)}_2
	\le
	\frac{1}{1+L_G\Delta T}\norm{\mathbf x-\mathbf y}_2,
	\]
	for \(n=0,\ldots,N-1\) and all \(\mathbf x,\mathbf y\in\bbR^d\).
\end{assumption}

\subsection{Projection-dependent one-step analysis}
\label{subsec:nonlinear-case}

For one update of \eqref{eq:projection-corrected-pppc} using the
discrepancy-form propagator with \(P^k=P\), define the unresolved
fine--coarse discrepancy
\begin{equation}
	\label{eq:nonlinear-unresolved-correction}
	\cR_{P,n}(\mathbf x)
	:=
	\cF_n(\mathbf x)-\cH_{P,n}(\mathbf x)
	=
	\cD_n(\mathbf x)-\cD_n(P\mathbf x).
\end{equation}
If \(E_\infty^k>0\), define the \emph{unresolved-error ratio}
\(\theta_Q^k(P)\) and the \emph{projected-error ratio} \(\theta_P^k(P)\) by
\[
\theta_Q^k(P)
:=
\frac{\max_{0\le n<N}\norm{Q\mathbf e_n^k}_2}{E_\infty^k},
\qquad
\theta_P^k(P)
:=
\frac{\max_{0\le n<N}\norm{P\mathbf e_n^k}_2}{E_\infty^k},
\]
and define the \emph{base-state weight} by
\[
U_k^\perp(P)
:=
\max_{0\le n<N}\norm{Q\mathbf U_n^k}_2 .
\]
The next lemma combines these quantities in the
projection-dependent factor \(\rho_k(P)\).

\begin{lemma}[Projection-dependent local one-step convergence estimate]
	\label{lem:local-nonlinear-onestep}
	For the nonlinear problem \eqref{eq:nonlinear-periodic-problem}, let
	\(\mathbf u^\star\) be the fine periodic reference in
	\eqref{eq:fine-periodic-reference}.  Suppose that
	\Cref{ass:local-nonlinear-neighborhood,ass:local-nonlinear-nondegeneracy,ass:local-discrepancy-smoothness}
	hold at iteration \(k\) for the orthogonal projection \(P\).
	Assume also that, for some \(R>0\), the update
	\(\mathbf U^{k+1}\) satisfies \(E_\infty^{k+1}\le R\) and the
	smallness condition
	\begin{equation}
		\label{eq:nonlinear-smallness}
		\tfrac12\widehat\Gamma_k\kappa_kR<1.
	\end{equation}
	If \(E_\infty^k=0\), then \(E_\infty^{k+1}=0\).  If \(E_\infty^k>0\),
	then the projection-based PP-PC update
	\eqref{eq:projection-corrected-pppc} using the discrepancy-form propagator
	with \(P^k=P\) satisfies
	\begin{equation}
		\label{eq:local-nonlinear-projection-estimate}
		E_\infty^{k+1}
		\le
		\rho_k(P)E_\infty^k,
	\end{equation}
	where
	\begin{equation}
		\label{eq:local-nonlinear-one-step-factor}
		\rho_k(P)
		:=
		\frac{\widehat\Gamma_k}{1-\tfrac12\widehat\Gamma_k\kappa_kR}
		\left[
		\mu_k\theta_Q^k(P)
		+
		\nu_kU_k^\perp(P)\theta_P^k(P)
		+
		\frac{\nu_k}{2}
		\theta_Q^k(P)\theta_P^k(P)E_\infty^k
		\right].
	\end{equation}
\end{lemma}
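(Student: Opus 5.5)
We plan to derive an error recursion, write it as a linearized cyclic system plus remainders, invert \(\widehat{\cC}_{P}\), and bound the remainders using \Cref{ass:local-discrepancy-smoothness}.

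\textbf{Error recursion.} Subtract the update \eqref{eq:projection-corrected-pppc} with \(\cX_{P,n}=\cH_{P,n}\) from the reference relation \(\mathbf u_{n+1}^\star=\cF_n(\mathbf u_n^\star)=\cH_{P,n}(\mathbf u_n^\star)+\cR_{P,n}(\mathbf u_n^\star)\). This gives, for \(n=0,\ldots,N-1\) (indices mod \(N\)),
\[
\mathbf e_{n+1}^{k+1}
=
\bigl[\cH_{P,n}(\mathbf u_n^\star)-\cH_{P,n}(\mathbf U_n^{k+1})\bigr]
+
\bigl[\cR_{P,n}(\mathbf u_n^\star)-\cR_{P,n}(\mathbf U_n^{k})\bigr].
\]
For the first bracket we use Taylor's formula along the segment from \(\mathbf U_n^{k+1}\) to \(\mathbf u_n^\star\), which lies in the convex set \(\mathcal B_k\). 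It equals \(\mathrm D\cH_{P,n}(\mathbf u_n^\star)\mathbf e_n^{k+1}-\mathbf r_n\), and by \eqref{eq:nonlinear-corrected-curvature} the remainder satisfies \(\norm{\mathbf r_n}_2\le\frac12\kappa_k\norm{\mathbf e_n^{k+1}}_2^2\). Hence
\[
\widehat{\cC}_{P}\mathbf e^{k+1}=\mathbf s-\mathbf r,
\qquad
\mathbf s_{n+1}:=\cR_{P,n}(\mathbf u_n^\star)-\cR_{P,n}(\mathbf U_n^k).
\]
By \Cref{ass:local-nonlinear-nondegeneracy}, \(\widehat{\cC}_P\) is invertible, so
\[
E_\infty^{k+1}\le\widehat\Gamma_k\bigl(\norm{\mathbf s}_\infty+\tfrac12\kappa_k (E_\infty^{k+1})^2\bigr)
\le\widehat\Gamma_k\norm{\mathbf s}_\infty+\tfrac12\widehat\Gamma_k\kappa_kR\,E_\infty^{k+1}.
\]
The smallness condition \eqref{eq:nonlinear-smallness} lets us absorb the last term, yielding the prefactor \(\widehat\Gamma_k/(1-\tfrac12\widehat\Gamma_k\kappa_kR)\). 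If \(E_\infty^k=0\), then \(\mathbf s=\mathbf 0\), and the same inequality forces \(E_\infty^{k+1}=0\).

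\textbf{Bounding the source (the main step).} Write \(\mathbf s_{n+1}\) as the difference of two increments,
\[
\mathbf s_{n+1}=\bigl[\cD_n(\mathbf u_n^\star)-\cD_n(\mathbf U_n^k)\bigr]-\bigl[\cD_n(P\mathbf u_n^\star)-\cD_n(P\mathbf U_n^k)\bigr].
\]
Set \(\mathbf x_s:=\mathbf U_n^k+s\mathbf e_n^k\); both \(\mathbf x_s\) and \(P\mathbf x_s\) lie in \(\mathcal B_k\) by \Cref{ass:local-nonlinear-neighborhood}. The integral form of the mean value theorem gives
\[
\mathbf s_{n+1}=\int_0^1\bigl[\mathrm D\cD_n(\mathbf x_s)\mathbf e_n^k-\mathrm D\cD_n(P\mathbf x_s)P\mathbf e_n^k\bigr]\,ds .
\]
Splitting \(\mathbf e_n^k=Q\mathbf e_n^k+P\mathbf e_n^k\), the integrand becomes
\[
\mathrm D\cD_n(\mathbf x_s)Q\mathbf e_n^k+\bigl[\mathrm D\cD_n(\mathbf x_s)-\mathrm D\cD_n(P\mathbf x_s)\bigr]P\mathbf e_n^k .
\]
The first piece is bounded by \(\mu_k\norm{Q\mathbf e_n^k}_2\). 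The second is bounded by \(\nu_k\norm{Q\mathbf x_s}_2\norm{P\mathbf e_n^k}_2\), and \(\norm{Q\mathbf x_s}_2\le\norm{Q\mathbf U_n^k}_2+s\norm{Q\mathbf e_n^k}_2\). Integrating in \(s\) turns the factor \(s\) into \(\tfrac12\), so
\[
\norm{\mathbf s_{n+1}}_2\le\mu_k\norm{Q\mathbf e_n^k}_2+\nu_k\norm{Q\mathbf U_n^k}_2\norm{P\mathbf e_n^k}_2+\tfrac{\nu_k}{2}\norm{Q\mathbf e_n^k}_2\norm{P\mathbf e_n^k}_2 .
\]

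\textbf{Conclusion.} Take the maximum over \(n\) and use the definitions of \(\theta_Q^k(P)\), \(\theta_P^k(P)\) and \(U_k^\perp(P)\). This gives \(\norm{\mathbf s}_\infty\) bounded by \(E_\infty^k\) times the bracket in \eqref{eq:local-nonlinear-one-step-factor}; the last term carries an extra factor \(E_\infty^k\) because it is quadratic in the error. Combining with the absorbed inequality above gives \eqref{eq:local-nonlinear-projection-estimate}.

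The delicate point is the splitting in the source bound. Writing the difference as \(\mathrm D\cD_n(\mathbf x_s)-\mathrm D\cD_n(P\mathbf x_s)\) acting on \(P\mathbf e\) is what produces exactly the base-state weight \(U_k^\perp\) and the \(\tfrac12\) coefficient. We also need to check that every evaluation point lies in \(\mathcal B_k\), which is what \Cref{ass:local-nonlinear-neighborhood} provides.
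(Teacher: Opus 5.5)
Your proposal is correct and follows the paper's proof essentially step for step. It uses the same error identity splitting into an $\cH_{P,n}$-difference plus the unresolved source, the same Taylor remainder bounded by $\tfrac12\kappa_k\|\mathbf e_n^{k+1}\|_2^2$, the same paths $\mathbf U_n^k+s\mathbf e_n^k$ and their projections with the $Q\mathbf e_n^k$/$P\mathbf e_n^k$ splitting that yields the base-state and $\tfrac{\nu_k}{2}$ terms, and the same absorption via $(E_\infty^{k+1})^2\le R\,E_\infty^{k+1}$ and the smallness condition; the only differences are cosmetic (the sign of your remainder $\mathbf r$, and your explicit appeal to convexity of $\mathcal B_k$ to place the Taylor segment where the Lipschitz-Jacobian bound applies).
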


\begin{proof}
	By the update \eqref{eq:projection-corrected-pppc} using the
	discrepancy-form propagator with \(P^k=P\), the iterate
	\(\mathbf U^{k+1}\) satisfies
	\[
	\mathbf U_{n+1}^{k+1}
	=
	\cF_n(\mathbf U_n^k)
	+
	\cH_{P,n}(\mathbf U_n^{k+1})
	-
	\cH_{P,n}(\mathbf U_n^k),
	\qquad n=0,\ldots,N-1 .
	\]
	For \(n=N-1\), the equality is interpreted with
	\(\mathbf U_N^{k+1}=\mathbf U_0^{k+1}\) and
	\(\mathbf u_N^\star=\mathbf u_0^\star\), so that
	\(\mathbf e_N^{k+1}=\mathbf e_0^{k+1}\).
	Subtracting this identity from
	\eqref{eq:fine-periodic-reference-interface} and using
	\[
	\cF_n(\mathbf x)
	=
	\cH_{P,n}(\mathbf x)+\cR_{P,n}(\mathbf x),
	\]
	we obtain
	\begin{equation}
		\label{eq:nonlinear-corrected-error-equation}
		\mathbf e_{n+1}^{k+1}
		=
		\bigl[
		\cH_{P,n}(\mathbf u_n^\star)
		-
		\cH_{P,n}(\mathbf U_n^{k+1})
		\bigr]
		+
		\bigl[
		\cR_{P,n}(\mathbf u_n^\star)
		-
		\cR_{P,n}(\mathbf U_n^k)
		\bigr] .
	\end{equation}
	By \Cref{ass:local-nonlinear-nondegeneracy} each \(\cH_{P,n}\) is
	differentiable; writing \(\mathbf U_n^{k+1}=\mathbf u_n^\star-\mathbf e_n^{k+1}\),
	a Taylor expansion at the reference with the curvature bound
	\eqref{eq:nonlinear-corrected-curvature} gives
	\begin{equation}
		\label{eq:nonlinear-first-bracket-linearization}
		\cH_{P,n}(\mathbf u_n^\star)-\cH_{P,n}(\mathbf U_n^{k+1})
		=
		\mathrm D\cH_{P,n}(\mathbf u_n^\star)\,\mathbf e_n^{k+1}+\boldsymbol\eta_n,
		\qquad
		\norm{\boldsymbol\eta_n}_2\le\frac{\kappa_k}{2}\norm{\mathbf e_n^{k+1}}_2^2 .
	\end{equation}
	It remains to estimate
	\(\cR_{P,n}(\mathbf u_n^\star)-\cR_{P,n}(\mathbf U_n^k)\).  Write
	\(\mathbf U=\mathbf U_n^k\), \(\mathbf e=\mathbf e_n^k\), and
	\(\cD=\cD_n\).  For \(s\in[0,1]\), set
	\[
	\mathbf z_s:=\mathbf U+s\mathbf e,
	\qquad
	\boldsymbol\zeta_s:=P\mathbf z_s .
	\]
	Thus \(\boldsymbol\zeta_s=P\mathbf U+sP\mathbf e\).
	By \Cref{ass:local-nonlinear-neighborhood}, both paths lie in
	\(\mathcal B_k\).  Using
	\[
	\cR_{P,n}(\mathbf x)
	=
	\cD_n(\mathbf x)-\cD_n(P\mathbf x)
	\]
	and the fundamental theorem of calculus along the two paths,
	\begin{align*}
		\cR_{P,n}(\mathbf u_n^\star)-\cR_{P,n}(\mathbf U_n^k)
		&=
		\int_0^1
		\left[
		\mathrm D\cD(\mathbf z_s)\mathbf e
		-
		\mathrm D\cD(\boldsymbol\zeta_s)P\mathbf e
		\right]ds
		\\
		&=
		\int_0^1 \mathrm D\cD(\mathbf z_s)Q\mathbf e\,ds
		+
		\int_0^1
		\left[
		\mathrm D\cD(\mathbf z_s)-\mathrm D\cD(\boldsymbol\zeta_s)
		\right]
		P\mathbf e\,ds .
	\end{align*}
	The first integral is bounded by
	\(\mu_k\norm{Q\mathbf e_n^k}_2\).  For the second one,
	\eqref{eq:nonlinear-discrepancy-smoothness} gives
	\[
	\norm{
		\mathrm D\cD(\mathbf z_s)-\mathrm D\cD(\boldsymbol\zeta_s)
	}_2
	\le
	\nu_k\norm{\mathbf z_s-\boldsymbol\zeta_s}_2 .
	\]
	Moreover,
	\(
	\mathbf z_s-\boldsymbol\zeta_s
	=
	Q\mathbf U_n^k+sQ\mathbf e_n^k
	\).  Therefore
	\[
	\int_0^1\norm{\mathbf z_s-\boldsymbol\zeta_s}_2\,ds
	\le
	\norm{Q\mathbf U_n^k}_2
	+
	\frac12\norm{Q\mathbf e_n^k}_2 .
	\]
	Hence integration in \(s\) gives
	\begin{equation}
		\label{eq:nonlinear-unresolved-bound}
		\norm{
			\cR_{P,n}(\mathbf u_n^\star)-\cR_{P,n}(\mathbf U_n^k)
		}_2
		\le
		\mu_k\norm{Q\mathbf e_n^k}_2
		+
		\nu_k\norm{Q\mathbf U_n^k}_2
		\norm{P\mathbf e_n^k}_2
		+
		\frac{\nu_k}{2}
		\norm{Q\mathbf e_n^k}_2
		\norm{P\mathbf e_n^k}_2 .
	\end{equation}
	Substituting
	\eqref{eq:nonlinear-first-bracket-linearization} into
	\eqref{eq:nonlinear-corrected-error-equation} shows that the error history
	\(\mathbf e^{k+1}\) solves the cyclic system
	\[
	\widehat{\cC}_{P^k}\,\mathbf e^{k+1}=\mathbf s^k+\boldsymbol\eta,
	\qquad
	\mathbf s_{n+1}^k:=\cR_{P,n}(\mathbf u_n^\star)-\cR_{P,n}(\mathbf U_n^k),
	\]
	with indices modulo \(N\).  Here \(\mathbf s^k\) is the
	unresolved source history.  The history \(\boldsymbol\eta\) has blocks
	\((\boldsymbol\eta)_{n+1}:=\boldsymbol\eta_n\), with \(\boldsymbol\eta_n\)
	from \eqref{eq:nonlinear-first-bracket-linearization}.  By the same equation,
	\(\norm{\boldsymbol\eta}_\infty\le\tfrac{\kappa_k}{2}\left(E_\infty^{k+1}\right)^2\).
	Since \(\widehat{\cC}_{P^k}\) is invertible,
	\[
	E_\infty^{k+1}
	\le
	\widehat\Gamma_k\left(
	\norm{\mathbf s^k}_\infty+\tfrac{\kappa_k}{2}\left(E_\infty^{k+1}\right)^2
	\right),
	\]
	while \eqref{eq:nonlinear-unresolved-bound}, maximized over \(n\), bounds the
	unresolved source by
	\begin{align*}
	\norm{\mathbf s^k}_\infty
	\le{}&
	\mu_k\max_{0\le n<N}\norm{Q\mathbf e_n^k}_2
	+\nu_kU_k^\perp(P)\max_{0\le n<N}\norm{P\mathbf e_n^k}_2\\
	&+\frac{\nu_k}{2}\max_{0\le n<N}\norm{Q\mathbf e_n^k}_2\max_{0\le n<N}\norm{P\mathbf e_n^k}_2 .
	\end{align*}
	If \(E_\infty^k=0\), then \(\mathbf s^k=\mathbf 0\), so
	\(E_\infty^{k+1}\le\tfrac12\widehat\Gamma_k\kappa_kR\,E_\infty^{k+1}\) by
	\(E_\infty^{k+1}\le R\), and the smallness condition \eqref{eq:nonlinear-smallness}
	forces \(E_\infty^{k+1}=0\).  Otherwise, using
	\(\left(E_\infty^{k+1}\right)^2\le R\,E_\infty^{k+1}\) and
	\eqref{eq:nonlinear-smallness},
	\[
	\left(1-\tfrac12\widehat\Gamma_k\kappa_kR\right)E_\infty^{k+1}
	\le
	\widehat\Gamma_k\norm{\mathbf s^k}_\infty;
	\]
	dividing by \(E_\infty^k\) and using the definitions of \(\theta_Q^k(P)\),
	\(\theta_P^k(P)\), and \(U_k^\perp(P)\) gives
	\eqref{eq:local-nonlinear-projection-estimate}.
\end{proof}

\begin{remark}
	The factor \(\rho_k(P)\) is the nonlinear counterpart of the one-step
	PP-PC factor in \cite[Theorem~5.2]{gander2013periodicParareal}.
	It may vary with \(k\) because both
	the projection and the local constants may change.  The three
	bracketed contributions in \eqref{eq:local-nonlinear-one-step-factor} are the unresolved-error term
	\(\mu_k\theta_Q^k(P)\), the base-state term
	\(\nu_kU_k^\perp(P)\theta_P^k(P)\), and the
	discrepancy-curvature remainder
	\(\tfrac{\nu_k}{2}\theta_Q^k(P)\theta_P^k(P)E_\infty^k\).  If \(P=0\), then
	\[
	\theta_Q^k(0)=1,\qquad \theta_P^k(0)=0,\qquad
	\rho_k(0)=
	\frac{\widehat\Gamma_k\mu_k}
	{1-\tfrac12\widehat\Gamma_k\kappa_kR},
	\]
	the local linearized analogue of the discrepancy-Lipschitz PP-PC factor.
	If \(P=\Id\), then
	\(\theta_Q^k(\Id)=U_k^\perp(\Id)=0\), so \(\rho_k(\Id)=0\), consistently
	with \(\cH_{\Id,n}=\cF_n\).
\end{remark}

\begin{remark}
	The local assumptions are tailored to the discrepancy form, which
	evaluates \(\cG_n\) at \(\mathbf x\) and \(\cD_n\) at
	\(P\mathbf x\), both covered by \(\mathcal B_k\).
	The split form \eqref{eq:krylov-enhanced-propagator} additionally
	evaluates \(\cG_n\) at \(Q\mathbf x\), which need not belong to
	\(\mathcal B_k\) and would require regularity at those additional states.
	Since the fine propagator enters only through \(\cD_n\), no contraction
	of it is required.
\end{remark}

\begin{remark}
	\label{rem:contraction-recovers-nondegeneracy}
	Coarse-propagator contraction together with a small
	discrepancy is sufficient for the nondegeneracy condition in
	\Cref{ass:local-nonlinear-nondegeneracy}.
	Indeed, \Cref{ass:propagator-contraction,ass:local-discrepancy-smoothness} give
	\[
	\mathrm D\cH_{P,n}(\mathbf u_n^\star)
	=
	\mathrm D\cG_n(\mathbf u_n^\star)
	+
	\mathrm D\cD_n(P\mathbf u_n^\star)P,
	\qquad
	\norm{\mathrm D\cH_{P,n}(\mathbf u_n^\star)}_2
	\le
	\alpha_k:=\frac{1}{1+L_G\Delta T}+\mu_k ,
	\]
	since \(P\mathbf u_n^\star\in\mathcal B_k\)
	(\Cref{ass:local-nonlinear-neighborhood} with \(s=1\)) and \(P\) is
	nonexpansive.  If \(\alpha_k<1\), a Neumann-series argument gives
	\(\widehat\Gamma_k\le(1-\alpha_k)^{-1}\), and
	\cref{lem:local-nonlinear-onestep} yields
	\begin{equation}
		\label{eq:local-nonlinear-contraction-factor}
		E_\infty^{k+1}
		\le
		\rho_k^{\alpha}(P)E_\infty^k,
		\rho_k^{\alpha}(P)
		:=
		\frac{\mu_k\theta_Q^k(P)+\nu_kU_k^\perp(P)\theta_P^k(P)
		      +\tfrac{\nu_k}{2}\theta_Q^k(P)\theta_P^k(P)E_\infty^k}{(1-\alpha_k)-\tfrac12\kappa_kR},
	\end{equation}
	provided \(\tfrac12\kappa_kR<1-\alpha_k\), which also implies
	\eqref{eq:nonlinear-smallness}.

	When the discrepancy admits the local expansion
	\(\cD_n(\mathbf x)=\boldsymbol\psi_{n,p+1}(\mathbf x)(\Delta T)^{p+1}
	+\mathbf r_n(\mathbf x)\), with \(p\ge1\) the order of the coarse propagator
	and, uniformly in \(n\), \(\boldsymbol\psi_{n,p+1}\) bounded in \(C^{1,1}\)
	and \(\mathbf r_n=O((\Delta T)^{p+2})\) in the \(C^{1,1}\) sense, one has
	\(\mu_k,\nu_k=O((\Delta T)^{p+1})\).  If, in addition,
	\(\cG_n(\mathbf x)=\mathbf x+\Delta T\,\mathbf g_n(\mathbf x)
	+O(\Delta T^2)\) in \(C^{1,1}\), with \(\mathbf g_n\) uniformly bounded
	in \(C^{1,1}\), then
	\(\kappa_k=O(\Delta T)\) and
	\(\widehat\Gamma_k=O(1/\Delta T)\).
	Hence
	\(\tfrac12\widehat\Gamma_k\kappa_kR=O(R)\).  For a fixed sufficiently small
	\(R\), the smallness factor
	\(\tfrac12\widehat\Gamma_k\kappa_kR\) remains uniformly below 1 under
	refinement.  Moreover,
	\(\rho_k(P)=O((\Delta T)^p)\), provided
	\(E_\infty^k\) and \(U_k^\perp(P)\) remain bounded under refinement.
\end{remark}

The linear time-periodic problem
\begin{equation}
	\label{eq:linear-periodic-problem}
	\mathbf u'(t)=A(t)\mathbf u(t)+\mathbf g(t),
	\qquad t\in[0,T],
	\qquad \mathbf u(0)=\mathbf u(T),
\end{equation}
where \(A(t)\in\bbR^{d\times d}\) and
\(\mathbf g(t)\in\bbR^d\) are \(T\)-periodic, is the special case of
\eqref{eq:nonlinear-periodic-problem}
for which the fine and coarse propagators are affine maps of the initial state
on each coarse time interval \([T_n,T_{n+1}]\), as holds for
standard fixed-step integrators.

For this special case, the local smoothness conditions become
global and their constants simplify.  The
discrepancy \(\cD_n\) and the projection-based propagator \(\cH_{P,n}\) have constant
Jacobians, and one may take \(\mathcal B_k=\bbR^d\).
Writing \(F_n\) and \(G_n\) for the linear parts of \(\cF_n\) and \(\cG_n\),
the projection-based propagator has the linear part \(F_nP+G_nQ\); the
constants of \Cref{ass:local-discrepancy-smoothness} reduce to
\(\mu_k=\max_{0\le n<N}\norm{F_n-G_n}_2\) and \(\nu_k=\kappa_k=0\), and the
smallness condition \eqref{eq:nonlinear-smallness} is vacuous.
The cyclic operator \(\cC_P\) is now affine, and its
linearization \(\widehat{\cC}_P\) no longer depends on the base state: it is
the block-cyclic matrix with diagonal blocks \(\Id\) and cyclic subdiagonal
blocks \(-(F_nP+G_nQ)\).  The affine shifts cancel when two histories are
subtracted, so the error history solves the linear system
\[
	\widehat{\cC}_P\,\mathbf e^{k+1}
=
\bigl((F_n-G_n)\,Q\mathbf e_n^k\bigr)_{n+1},
\]
with indices modulo \(N\).  The update exists uniquely whenever
	\(\widehat{\cC}_P\) is invertible.  In this affine setting,
	the following assumption on the fine--coarse discrepancy \(\cD_n\)
	replaces both the local nonlinear region and local smoothness assumptions
	(\Cref{ass:local-nonlinear-neighborhood,ass:local-discrepancy-smoothness}):

\begin{assumption}[Fine--coarse discrepancy Lipschitz bound]
	\label{ass:discrepancy-lipschitz}
	There exists a constant \(C_D>0\), independent of
	\(n\) and \(\Delta T\), such that the fine--coarse discrepancy
	\(\cD_n=\cF_n-\cG_n\) satisfies
	\begin{equation}
		\label{eq:discrepancy-lipschitz-bound}
		\norm{\cD_n\left(\mathbf x\right)-\cD_n\left(\mathbf y\right)}_2
		\le
		C_D\left(\Delta T\right)^{p+1}
		\norm{\mathbf x-\mathbf y}_2,
		\quad \mathbf x,\mathbf y\in\bbR^d,\quad n=0,\ldots,N-1 ,
	\end{equation}
	where \(\Delta T\) is the coarse time interval length and \(p\) is the order of
	the coarse propagator.
\end{assumption}

Under \Cref{ass:discrepancy-lipschitz},
\(\mu_k\le C_D\left(\Delta T\right)^{p+1}\),
and the one-step factor of \cref{lem:local-nonlinear-onestep}
reduces to \(\rho_k(P)=\widehat\Gamma_k\mu_k\theta_Q^k(P)\); the admissible radius \(R\) may be taken as any bound on
\(E_\infty^{k+1}\) (such a bound exists under the assumed invertibility) and drops out of the
estimate since \(\nu_k=\kappa_k=0\).  For an arbitrary orthogonal projection,
\Cref{thm:projection-level-linear-estimate} records the resulting global estimate.

\begin{corollary}[General-projection linear convergence estimate]
	\label{thm:projection-level-linear-estimate}
	For the linear time-periodic problem \eqref{eq:linear-periodic-problem},
	assume that the fine and coarse propagators \(\cF_n\) and
	\(\cG_n\) are affine and that \Cref{ass:discrepancy-lipschitz} holds.  Let \(P\) be an arbitrary
	orthogonal projection, set \(Q:=\Id-P\), and let
	\(\widehat{\cC}_P\) be the linearized cyclic operator: the block-cyclic
	matrix with diagonal blocks \(\Id\) and cyclic subdiagonal blocks
	\(-(F_nP+G_nQ)\), with \(F_n\) and \(G_n\) the linear parts of \(\cF_n\)
	and \(\cG_n\).  Suppose \(\widehat{\cC}_P\) is invertible and let
	\begin{equation}
		\label{eq:linear-Gamma-definition}
		\Gamma
		:=
		\norm{\widehat{\cC}_P^{-1}}_{\infty\to\infty} .
	\end{equation}
	Then the error of the projection-based PP-PC update
	\eqref{eq:projection-corrected-pppc} using the discrepancy-form propagator
	with \(P^k=P\) satisfies
	\begin{equation}
		\label{eq:projection-level-linear-estimate}
		E_\infty^{k+1}
		\le
		\Gamma\,C_D\left(\Delta T\right)^{p+1}
		\max_{0\le n<N}\norm{Q\mathbf e_n^k}_2 .
	\end{equation}
\end{corollary}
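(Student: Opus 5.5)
The plan is a direct argument in the affine setting rather than a specialization of \cref{lem:local-nonlinear-onestep}. Working directly avoids verifying the region and smoothness assumptions, and it avoids the radius \(R\).

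The first step is to write \(\cF_n(\mathbf x)=F_n\mathbf x+\mathbf f_n\) and \(\cG_n(\mathbf x)=G_n\mathbf x+\mathbf g_n\). Then \(\cD_n(\mathbf x)=(F_n-G_n)\mathbf x+(\mathbf f_n-\mathbf g_n)\). It follows that \(\cH_{P,n}(\mathbf x)=(F_nP+G_nQ)\mathbf x+\mathbf f_n\), since \(G_n\mathbf x-G_nP\mathbf x=G_nQ\mathbf x\) and the constant \(\mathbf g_n\) cancels. Also \(\cR_{P,n}(\mathbf x)=\cF_n(\mathbf x)-\cH_{P,n}(\mathbf x)=(F_n-G_n)Q\mathbf x\).

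The second step is to take the error identity \eqref{eq:nonlinear-corrected-error-equation}. It was derived purely algebraically, by subtracting the update from \eqref{eq:fine-periodic-reference-interface}. In the affine case its first bracket is exactly \((F_nP+G_nQ)\mathbf e_n^{k+1}\), so \(\boldsymbol\eta_n=\mathbf 0\). Its second bracket is \(\mathbf s^k_{n+1}=(F_n-G_n)Q\mathbf e_n^k\). With indices read modulo \(N\), the error history therefore solves
\[
\widehat{\cC}_P\,\mathbf e^{k+1}=\mathbf s^k .
\]
Invertibility of \(\widehat{\cC}_P\) gives \(E_\infty^{k+1}\le\Gamma\norm{\mathbf s^k}_\infty\). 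This step is also where the update is seen to exist uniquely.

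The third step bounds the source. Apply \Cref{ass:discrepancy-lipschitz} with \(\mathbf x=Q\mathbf e_n^k\) and \(\mathbf y=\mathbf 0\); the constants cancel in the difference. This yields
\[
\norm{(F_n-G_n)Q\mathbf e_n^k}_2\le C_D(\Delta T)^{p+1}\norm{Q\mathbf e_n^k}_2 .
\]
Maximizing over \(n\) and combining with the second step gives \eqref{eq:projection-level-linear-estimate}.

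The main point needing care is the second step. The nonlinear lemma's Taylor remainder and the \(\nu_k\) terms must genuinely vanish, with no dependence on \(R\) or on \(\mathcal B_k\). Verifying the exact affine identity for \(\cH_{P,n}\) handles this, so no limiting argument is needed.

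Alternatively, one could cite \cref{lem:local-nonlinear-onestep} with \(\mathcal B_k=\bbR^d\), \(\nu_k=\kappa_k=0\), \(\mu_k\le C_D(\Delta T)^{p+1}\), and \(\widehat\Gamma_k=\Gamma\). The case \(E_\infty^k=0\) is covered there, and the direct argument above handles it trivially as well.
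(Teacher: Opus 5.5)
Your proposal is correct and follows essentially the same route as the paper: a direct affine computation that turns the error identity into the linear cyclic system \(\widehat{\cC}_P\mathbf e^{k+1}=\mathbf s\) with source \((F_n-G_n)Q\mathbf e_n^k\), followed by the \(\Gamma\)-bound and \Cref{ass:discrepancy-lipschitz}. The only difference is cosmetic: the paper applies the Lipschitz bound to \(\cD_n(Q\mathbf u_n^\star)-\cD_n(Q\mathbf U_n^k)\), whereas you take \(\mathbf x=Q\mathbf e_n^k\) and \(\mathbf y=\mathbf 0\), which yields the same estimate.
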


\begin{proof}
	Since the propagators are affine, the projection-based PP-PC update is an
	affine cyclic system whose linear part is \(\widehat{\cC}_P\); as
	\(\widehat{\cC}_P\) is invertible, the update has a unique solution
	\(\mathbf U^{k+1}\).  Written rowwise, with
	\(\mathbf U_N^{k+1}=\mathbf U_0^{k+1}\) at \(n=N-1\),
	\[
	\mathbf U_{n+1}^{k+1}
	=
	\cF_n(\mathbf U_n^k)
	+
	\cH_{P,n}(\mathbf U_n^{k+1})
	-
	\cH_{P,n}(\mathbf U_n^k),
	\qquad n=0,\ldots,N-1 .
	\]
	Since \(\mathbf u_{n+1}^\star=\cF_n(\mathbf u_n^\star)\), subtracting this
	update from the fine periodic reference equation gives
		\begin{equation}
			\label{eq:linear-error-identity-propagator}
			\begin{aligned}
				\mathbf e_{n+1}^{k+1}
				&=
				\left[
				\cH_{P,n}(\mathbf u_n^\star)
				-
				\cH_{P,n}(\mathbf U_n^{k+1})
				\right]
				\\
				&\quad+
				\left[
				\cF_n(\mathbf u_n^\star)-\cH_{P,n}(\mathbf u_n^\star)
				\right]
				-
				\left[
				\cF_n(\mathbf U_n^k)-\cH_{P,n}(\mathbf U_n^k)
				\right] .
			\end{aligned}
		\end{equation}
	Writing \(H_{P,n}:=F_nP+G_nQ\) for the linear part of \(\cH_{P,n}\), the
	first bracket equals \(H_{P,n}\mathbf e_n^{k+1}\).  The
	remaining brackets are
	\(\cR_{P,n}(\mathbf u_n^\star)-\cR_{P,n}(\mathbf U_n^k)\).  Since
	\(\cF_n(\mathbf x)-\cH_{P,n}(\mathbf x)=\cD_n(\mathbf x)-\cD_n(P\mathbf x)\)
	and the affine shifts cancel, this difference depends only on \(Q\mathbf e_n^k\):
	\begin{align*}
		&\bigl[\cF_n(\mathbf u_n^\star)-\cH_{P,n}(\mathbf u_n^\star)\bigr]
		-
		\bigl[\cF_n(\mathbf U_n^k)-\cH_{P,n}(\mathbf U_n^k)\bigr]
		\\
		={}&
		\cD_n(Q\mathbf u_n^\star)-\cD_n(Q\mathbf U_n^k).
	\end{align*}
	With indices understood modulo \(N\), define
	\(\mathbf s_{n+1}:=\cD_n(Q\mathbf u_n^\star)-\cD_n(Q\mathbf U_n^k)\).
	Then \eqref{eq:discrepancy-lipschitz-bound} gives
	\(\norm{\mathbf s_{n+1}}_2\le C_D\left(\Delta T\right)^{p+1}
	\norm{Q\mathbf e_n^k}_2\).
	Thus \eqref{eq:linear-error-identity-propagator} reads
	\[
	\mathbf e_{n+1}^{k+1}
	=
	H_{P,n}\mathbf e_n^{k+1}
	+\mathbf s_{n+1},
	\qquad n=0,\ldots,N-1,
	\]
	with \(\mathbf e_N^{k+1}=\mathbf e_0^{k+1}\).  These \(N\) relations are the
	linear system \(\widehat{\cC}_P\mathbf e^{k+1}=\mathbf s\), with
	\(\widehat{\cC}_P\) the linearized cyclic operator of the statement.  Since
	\(\widehat{\cC}_P\) is invertible,
	\[
	E_\infty^{k+1}
	=
	\max_{0\le n<N}\norm{\mathbf e_n^{k+1}}_2
	\le
	\Gamma\max_{0\le n<N}\norm{\mathbf s_{n+1}}_2
	\le
	\Gamma\,C_D\left(\Delta T\right)^{p+1}
	\max_{0\le n<N}\norm{Q\mathbf e_n^k}_2 ,
	\]
	which is \eqref{eq:projection-level-linear-estimate}.
\end{proof}

The projection enters the next-step error bound
\eqref{eq:projection-level-linear-estimate} through two quantities: the cyclic
stability factor \(\Gamma\) and the unresolved-error norm
\(\max_{0\le n<N}\norm{Q\mathbf e_n^k}_2\).

\subsection{Fourier-aware convergence}
\label{subsec:fourier-nonlinear}

The unresolved-error quantity
\(\norm{Q\mathbf e^k}_\infty\) enters the projection-dependent
factor \(\rho_k(P)\) in \eqref{eq:local-nonlinear-one-step-factor} through
\(\theta_Q^k(P)\) and appears directly in the linear estimate
\eqref{eq:projection-level-linear-estimate}.  To bound this
quantity, consider a coarse-time history
\(\mathbf w=(\mathbf w_0,\ldots,\mathbf w_{N-1})\in(\bbR^d)^N\) with temporal
Fourier coefficients \(\widehat{\mathbf w}_\ell\) defined by
\eqref{eq:temporal-fourier-coefficients}.  For an orthogonal projection \(P\) we use the same symbol for
its complex-linear extension to \(\mathbb{C}^d\),
\(P(\mathbf a+\imath\mathbf b):=P\mathbf a+\imath P\mathbf b\), and the
Euclidean norm on \(\mathbb{C}^d\) is
\(\norm{\mathbf a+\imath\mathbf b}_2^2=\norm{\mathbf a}_2^2+\norm{\mathbf b}_2^2\).  For a selected temporal mode set
\(\mathcal I\), define the tail and the selected-mode leak by
\begin{equation}
	\label{eq:linear-tail-definition}
	\Tail_{\mathcal I}(\mathbf w):=\sum_{\ell\notin\mathcal I}\norm{\widehat{\mathbf w}_\ell}_2,
	\qquad
	\Leak_{\mathcal I}^{P}(\mathbf w):=\sum_{\ell\in\mathcal I}\norm{(\Id-P)\widehat{\mathbf w}_\ell}_2 .
\end{equation}
The corresponding maximum-in-time estimate for the unresolved part \(Q\mathbf w\) of a
history is stated next.

\begin{lemma}[Tail--leak bound for the unresolved part of a history]
	\label{lem:tail-leak-projected-history}
	Let \(\mathcal I\subset\{0,\ldots,N-1\}\) be a selected temporal mode set,
	and let \(P\) be an orthogonal projection with complement \(Q:=\Id-P\).
	Then, for every history \(\mathbf w\in(\bbR^d)^N\),
	\begin{equation}
		\label{eq:tail-leak-projected-history}
		\max_{0\le n<N}
		\norm{Q\mathbf w_n}_2
		\le
		\frac{1}{\sqrt N}
		\left(
		\Tail_{\mathcal I}(\mathbf w)
		+
		\Leak_{\mathcal I}^{P}(\mathbf w)
		\right).
	\end{equation}
\end{lemma}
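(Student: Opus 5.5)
The plan is to use the inverse DFT and the triangle inequality. With the unitary normalization in \eqref{eq:temporal-fourier-coefficients}, each history value is recovered as
\[
\mathbf w_n=\frac{1}{\sqrt N}\sum_{\ell=0}^{N-1}\widehat{\mathbf w}_\ell\exp\left(2\pi\imath\ell n/N\right),
\qquad n=0,\ldots,N-1,
\]
where the identity holds in \(\mathbb C^d\) even though \(\mathbf w_n\) is real.

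Next, apply \(Q\), extended complex-linearly. Because \(Q\) acts only on the state index and the DFT acts only on the time index, the two operations commute. This gives
\[
Q\mathbf w_n=\frac{1}{\sqrt N}\sum_{\ell=0}^{N-1}(Q\widehat{\mathbf w}_\ell)\exp\left(2\pi\imath\ell n/N\right).
\]
The left side is real, so its norm is the same whether it is viewed in \(\bbR^d\) or \(\mathbb C^d\). For the chosen complex norm, \(\norm{\mathbf a+\imath\mathbf b}_2^2=\norm{\mathbf a}_2^2+\norm{\mathbf b}_2^2\), which is the standard Hermitian norm. It is therefore absolutely homogeneous under unit-modulus scalars and satisfies the triangle inequality. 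Applying both properties yields
\[
\norm{Q\mathbf w_n}_2\le\frac{1}{\sqrt N}\sum_{\ell=0}^{N-1}\norm{Q\widehat{\mathbf w}_\ell}_2 .
\]

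Now split the sum according to whether \(\ell\in\mathcal I\). For \(\ell\in\mathcal I\), the terms are exactly the summands of \(\Leak_{\mathcal I}^{P}(\mathbf w)\). For \(\ell\notin\mathcal I\), we use that \(Q\) is an orthogonal projection, so its complex extension is nonexpansive. This holds because \(\norm{Q\mathbf a+\imath Q\mathbf b}_2^2=\norm{Q\mathbf a}_2^2+\norm{Q\mathbf b}_2^2\le\norm{\mathbf a}_2^2+\norm{\mathbf b}_2^2\). Hence \(\norm{Q\widehat{\mathbf w}_\ell}_2\le\norm{\widehat{\mathbf w}_\ell}_2\) for these terms, and their sum is bounded by \(\Tail_{\mathcal I}(\mathbf w)\). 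The resulting bound is uniform in \(n\), so taking the maximum over \(n\) gives \eqref{eq:tail-leak-projected-history}.

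There is no real obstacle in this argument. The only points needing care are bookkeeping: the commutation of \(Q\) with the DFT, the behavior of the complexified norm under phases and under the complexified projection, and keeping the factor \(1/\sqrt N\) from the inverse transform.
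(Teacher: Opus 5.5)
Your proposal is correct and follows essentially the same route as the paper's proof: apply \(Q\) to the inverse DFT, use the triangle inequality with unit-modulus phases, split the sum at \(\mathcal I\), bound the off-\(\mathcal I\) terms by nonexpansiveness of \(Q\) (giving the tail), identify the remaining terms as the leak, and take the maximum over \(n\). Your explicit checks that \(Q\) commutes with the DFT and that its complex extension is nonexpansive in the stated norm fill in details the paper leaves implicit.
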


\begin{proof}
	For each \(n=0,\ldots,N-1\), applying \(Q\) to the inverse
	of the DFT \eqref{eq:temporal-fourier-coefficients} gives
	\[
	Q\mathbf w_n
	=
	\frac{1}{\sqrt N}
	\sum_{\ell=0}^{N-1}
	Q\widehat{\mathbf w}_\ell
	\exp(2\pi\imath\ell n/N).
	\]
	Since \(\left|\exp(2\pi\imath\ell n/N)\right|=1\), the triangle inequality
	and the split of the sum into \(\ell\notin\mathcal I\) and
	\(\ell\in\mathcal I\) yield
	\[
	\norm{Q\mathbf w_n}_2
	\le
	\frac{1}{\sqrt N}
	\left(
	\sum_{\ell\notin\mathcal I}
	\norm{Q\widehat{\mathbf w}_\ell}_2
	+
	\sum_{\ell\in\mathcal I}
	\norm{Q\widehat{\mathbf w}_\ell}_2
	\right)
	\le
	\frac{1}{\sqrt N}
	\left(
	\Tail_{\mathcal I}(\mathbf w)
	+
	\Leak_{\mathcal I}^{P}(\mathbf w)
	\right).
	\]
	Here nonexpansiveness of \(Q\) bounds the first sum by
	\(\Tail_{\mathcal I}(\mathbf w)\), while the second sum equals
	\(\Leak_{\mathcal I}^{P}(\mathbf w)\).
	Taking the maximum over \(n\) completes the proof.
\end{proof}

For the Fourier-aware projection, let \(\mathcal I^k\) be the selected temporal
mode set and let \(P_{\mathcal I^k}\) be the orthogonal projection onto
the mixed Fourier space \(\mathcal V_{\rm mix}^k(\mathcal I^k)\).  If \(E_\infty^k>0\),
define the \emph{Fourier tail--leak ratio} \(\Theta_Q^k\) and
the \emph{projected-error ratio} \(\Theta_P^k\) by
\begin{equation}
\label{eq:fourier-aware-ratios}
\Theta_Q^k
:=
\frac{
\Tail_{\mathcal I^k}(\mathbf e^k)
+
\Leak_{\mathcal I^k}^{P_{\mathcal I^k}}(\mathbf e^k)
}{
\sqrt N\,E_\infty^k
},
\qquad
\Theta_P^k:=\theta_P^k(P_{\mathcal I^k}).
\end{equation}
\Cref{lem:tail-leak-projected-history} gives
\(\theta_Q^k(P_{\mathcal I^k})\le\Theta_Q^k\).

The next result combines
\Cref{lem:local-nonlinear-onestep} with
\Cref{lem:tail-leak-projected-history}.

\begin{theorem}[Fourier-aware local convergence estimate]
	\label{thm:fourier-aware-nonlinear}
	For the nonlinear problem
	\eqref{eq:nonlinear-periodic-problem}, let \(\mathbf u^\star\) be the fine
	periodic reference in \eqref{eq:fine-periodic-reference}, fix an iteration \(k\), and let
	\(P^k=P_{\mathcal I^k}\) be the Fourier-aware projection, with complement
	\(Q^k:=\Id-P^k\).  Suppose that
	\Cref{ass:local-nonlinear-neighborhood,ass:local-nonlinear-nondegeneracy,ass:local-discrepancy-smoothness}
	hold for iteration \(k\) and the projection \(P=P^k\), that for some
	\(R>0\) the update \(\mathbf U^{k+1}\) of \eqref{eq:fourier-aware-pppc}
	satisfies \(E_\infty^{k+1}\le R\),
	and that the smallness condition \eqref{eq:nonlinear-smallness} holds.  If
	\(E_\infty^k=0\), then \(E_\infty^{k+1}=0\).  If \(E_\infty^k>0\), then
	\begin{equation}
		\label{eq:local-nonlinear-tail-leak-estimate}
		E_\infty^{k+1}
		\le
		\rho_{k,\mathrm{F}}E_\infty^k,
	\end{equation}
	where
	\begin{equation}
		\label{eq:local-nonlinear-fourier-factor}
		\rho_{k,\mathrm{F}}
		:=
		\frac{\widehat\Gamma_k}{1-\tfrac12\widehat\Gamma_k\kappa_kR}
		\left[
		\mu_k\Theta_Q^k
		+
		\nu_kU_k^\perp(P_{\mathcal I^k})\Theta_P^k
		+
		\frac{\nu_k}{2}
		\Theta_Q^k\Theta_P^kE_\infty^k
		\right].
	\end{equation}
\end{theorem}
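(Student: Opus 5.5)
The plan is to apply \cref{lem:local-nonlinear-onestep} with \(P=P_{\mathcal I^k}\) and then replace \(\theta_Q^k(P_{\mathcal I^k})\) by the larger quantity \(\Theta_Q^k\), using \cref{lem:tail-leak-projected-history}. The hypotheses of the theorem are exactly those of \cref{lem:local-nonlinear-onestep} for this choice of projection: \Cref{ass:local-nonlinear-neighborhood,ass:local-nonlinear-nondegeneracy,ass:local-discrepancy-smoothness} at iteration \(k\), the bound \(E_\infty^{k+1}\le R\), and the smallness condition \eqref{eq:nonlinear-smallness}. The update \eqref{eq:fourier-aware-pppc} is \eqref{eq:projection-corrected-pppc} with \(\cX_{P^k,n}=\cH_{P_{\mathcal I^k},n}\). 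The case \(E_\infty^k=0\) is therefore immediate from the lemma.

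For \(E_\infty^k>0\), the lemma gives \(E_\infty^{k+1}\le\rho_k(P_{\mathcal I^k})E_\infty^k\). By definition \(\Theta_P^k=\theta_P^k(P_{\mathcal I^k})\), so that factor needs no change. Next apply \cref{lem:tail-leak-projected-history} to the history \(\mathbf w=\mathbf e^k\) with \(\mathcal I=\mathcal I^k\) and \(P=P_{\mathcal I^k}\). Dividing by \(E_\infty^k\) gives
\[
\theta_Q^k(P_{\mathcal I^k})\le\Theta_Q^k .
\]

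The remaining step is a monotonicity check. Inside the bracket of \eqref{eq:local-nonlinear-one-step-factor}, \(\theta_Q^k\) appears only in the terms \(\mu_k\theta_Q^k\) and \(\tfrac{\nu_k}{2}\theta_Q^k\theta_P^kE_\infty^k\). Their coefficients \(\mu_k\), \(\nu_k\), \(\theta_P^k\) and \(E_\infty^k\) are all nonnegative, so both terms are nondecreasing in \(\theta_Q^k\). The middle term \(\nu_kU_k^\perp(P_{\mathcal I^k})\Theta_P^k\) is unchanged. The prefactor \(\widehat\Gamma_k/(1-\tfrac12\widehat\Gamma_k\kappa_kR)\) is positive by \eqref{eq:nonlinear-smallness}. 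Hence \(\rho_k(P_{\mathcal I^k})\le\rho_{k,\mathrm F}\), which yields \eqref{eq:local-nonlinear-tail-leak-estimate}.

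There is no substantive obstacle. The only points needing care are the sign of the prefactor and the fact that the Fourier coefficients of the real history \(\mathbf e^k\) are complex. The second point is handled by the complex-linear extension of \(P\) already used in \cref{lem:tail-leak-projected-history}, so that lemma applies verbatim.
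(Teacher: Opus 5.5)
Your proposal is correct and follows essentially the same route as the paper: apply the projection-dependent one-step lemma with \(P=P_{\mathcal I^k}\), use the tail--leak lemma on \(\mathbf e^k\) to get \(\theta_Q^k(P_{\mathcal I^k})\le\Theta_Q^k\), and conclude \(\rho_k(P_{\mathcal I^k})\le\rho_{k,\mathrm F}\) because every coefficient of \(\theta_Q^k\) is nonnegative. Your remark that the prefactor is positive under the smallness condition makes explicit a point the paper's proof leaves implicit.
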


\begin{proof}
	\Cref{lem:local-nonlinear-onestep} applies with
	\(P=P^k\) and gives \(E_\infty^{k+1}=0\) when \(E_\infty^k=0\), and
	otherwise \(E_\infty^{k+1}\le\rho_k(P^k)E_\infty^k\) with the factor
	\eqref{eq:local-nonlinear-one-step-factor}.  By definition
	\(\theta_P^k(P^k)=\Theta_P^k\), while
	\Cref{lem:tail-leak-projected-history} applied to
	\(\mathbf w=\mathbf e^k\) gives \(\theta_Q^k(P^k)\le\Theta_Q^k\).  Since
	all coefficients multiplying these ratios in \(\rho_k(P^k)\) are
	nonnegative, \(\rho_k(P^k)\le\rho_{k,\mathrm{F}}\), and
	\eqref{eq:local-nonlinear-tail-leak-estimate} follows from
	\eqref{eq:local-nonlinear-projection-estimate}.
\end{proof}

The factor \(\rho_{k,\mathrm F}\) in
\eqref{eq:local-nonlinear-fourier-factor} contains three
bracketed contributions.
Through \(\Theta_Q^k\), the Fourier tail and leak control the unresolved-error
term \(\mu_k\Theta_Q^k\); the base-state term
\(\nu_kU_k^\perp(P_{\mathcal I^k})\Theta_P^k\) and
discrepancy-curvature remainder
\(\tfrac{\nu_k}{2}\Theta_Q^k\Theta_P^kE_\infty^k\) are the two nonlinear
bracketed contributions.  When each \(\cD_n\) is affine, one may take
\(\nu_k=0\), so only \(\mu_k\Theta_Q^k\) remains among these three
bracketed contributions, as in the linear tail--leak estimate in
\Cref{thm:linear-tail-leak-estimate}.  Since \(\Theta_Q^k\) and
\(\Theta_P^k\) depend on the unknown error, they are a posteriori diagnostics.
The factor \(\rho_{k,\mathrm F}\) may vary with \(k\) and need
not be less than one; hence \Cref{thm:fourier-aware-nonlinear} gives a one-step
estimate and does not by itself imply a uniform contraction.

\begin{corollary}[Linear tail--leak convergence estimate]
	\label{thm:linear-tail-leak-estimate}
	For the linear time-periodic problem \eqref{eq:linear-periodic-problem},
	assume that the fine and coarse propagators \(\cF_n\) and
	\(\cG_n\) are affine and that \Cref{ass:discrepancy-lipschitz} holds.  At iteration \(k\), let
	\(\mathcal I^k\) be the selected temporal
	mode set and let \(P_{\mathcal I^k}\) be the orthogonal projection onto
	the mixed Fourier space \(\mathcal V_{\rm mix}^k(\mathcal I^k)\) of
	\cref{sec:fourier-spaces}.
	Suppose the linearized cyclic operator
	\(\widehat{\cC}_{P_{\mathcal I^k}}\) of
	\Cref{thm:projection-level-linear-estimate} is invertible, and
	set
	\(\Gamma_k:=\norm{\widehat{\cC}_{P_{\mathcal I^k}}^{-1}}_{\infty\to\infty}\).
	Then the Fourier-aware projection-based
	PP-PC update satisfies
	\begin{equation}
		\label{eq:linear-tail-leak-estimate}
		E_\infty^{k+1}
		\le
		\frac{\Gamma_k\,C_D\left(\Delta T\right)^{p+1}}
		{\sqrt N}
		\left(
		\Tail_{\mathcal I^k}(\mathbf e^k)
		+
		\Leak_{\mathcal I^k}^{P_{\mathcal I^k}}(\mathbf e^k)
		\right).
	\end{equation}
\end{corollary}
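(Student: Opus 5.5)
The plan is to derive \eqref{eq:linear-tail-leak-estimate} by chaining two results already in hand: the general-projection linear estimate of \Cref{thm:projection-level-linear-estimate} and the history tail--leak bound of \Cref{lem:tail-leak-projected-history}.

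\textbf{Step 1: reduce to a general projection.} Fix the iteration \(k\) and take \(P:=P_{\mathcal I^k}\), the orthogonal projection onto \(\mathcal V_{\rm mix}^k(\mathcal I^k)\), with \(Q:=\Id-P\). The Fourier-aware update \eqref{eq:fourier-aware-pppc} is exactly the projection-based update \eqref{eq:projection-corrected-pppc} with the discrepancy-form propagator and \(P^k=P\). Moreover:
\begin{itemize}
\item the propagators are affine;
\item \Cref{ass:discrepancy-lipschitz} holds;
\item \(\widehat{\cC}_P\) is invertible by hypothesis, with \(\Gamma=\Gamma_k\).
\end{itemize}
So all hypotheses of \Cref{thm:projection-level-linear-estimate} are met, and it gives
\[
E_\infty^{k+1}\le\Gamma_k C_D(\Delta T)^{p+1}\max_{0\le n<N}\norm{Q\mathbf e_n^k}_2 .
\]
The only point to check is that nothing in that corollary's proof uses a property of \(P\) beyond orthogonality. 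The fact that \(P\) is built from the iterates is harmless, because \(P\) is fixed before the cyclic solve at step \(k\).

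\textbf{Step 2: apply the tail--leak bound.} Apply \Cref{lem:tail-leak-projected-history} with \(\mathbf w=\mathbf e^k\), the set \(\mathcal I=\mathcal I^k\), and \(P=P_{\mathcal I^k}\). This bounds \(\max_n\norm{Q\mathbf e_n^k}_2\) by
\[
N^{-1/2}\bigl(\Tail_{\mathcal I^k}(\mathbf e^k)+\Leak_{\mathcal I^k}^{P_{\mathcal I^k}}(\mathbf e^k)\bigr).
\]
Here \(\mathbf e^k\) is a real history, so the DFT and the complex extension of \(P\) apply as in that lemma. Substituting this bound into Step 1 gives \eqref{eq:linear-tail-leak-estimate}, since the prefactor \(\Gamma_k C_D(\Delta T)^{p+1}\) is nonnegative.

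The case \(E_\infty^k=0\) needs no separate treatment. Both sides of the bound make sense, and the source vanishes, so invertibility forces \(\mathbf e^{k+1}=\mathbf 0\).

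\textbf{Main obstacle.} There is essentially none beyond bookkeeping. The point to state carefully is that \(\Gamma_k\) is the same quantity \eqref{eq:linear-Gamma-definition} evaluated at the iteration-dependent projection \(P_{\mathcal I^k}\). One should also note that \(\mathcal I^k=\emptyset\) (as at \(k=0\) for the adaptive rule) is allowed. In that case \(\mathcal V_{\rm mix}^k=\{0\}\), \(P=0\), and the leak vanishes. The tail is then the full \(\ell^1\) sum of Fourier coefficients, so the bound remains valid.
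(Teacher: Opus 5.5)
Your proposal is correct and matches the paper's proof: the paper also invokes \Cref{thm:projection-level-linear-estimate} with \(P=P_{\mathcal I^k}\), then bounds the factor \(\max_n\norm{Q\mathbf e_n^k}_2\) by \Cref{lem:tail-leak-projected-history} applied to \(\mathbf w=\mathbf e^k\), which gives \eqref{eq:linear-tail-leak-estimate}. Your side remarks are accurate bookkeeping that the paper leaves implicit: \(P\) is fixed before the cyclic solve, and the empty mode set gives \(P=0\) with zero leak.
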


\begin{proof}
	\Cref{lem:tail-leak-projected-history}, applied with
	\(\mathbf w=\mathbf e^k\) and \(P=P_{\mathcal I^k}\), bounds the
	unresolved-error term in \eqref{eq:projection-level-linear-estimate};
	\eqref{eq:linear-tail-leak-estimate} follows.
\end{proof}

This estimate separates temporal-mode selection from projection-space choice.
Selecting all temporal modes gives \(\Tail_{\mathcal I^k}(\mathbf e^k)=0\).
The equality \(\Leak_{\mathcal I^k}^{P_{\mathcal I^k}}(\mathbf e^k)=0\) holds
if and only if the projection space contains both
\(\operatorname{Re}\widehat{\mathbf e}_\ell^k\) and
\(\operatorname{Im}\widehat{\mathbf e}_\ell^k\) for every
\(\ell\in\mathcal I^k\).  The solution-generated space
\(\mathcal V_{\rm sol}^k(\mathcal I^k)\) alone need not contain these error
coefficients.  The source
\(\cR_{P,n}(\mathbf u_n^\star)-\cR_{P,n}(\mathbf U_n^k)\) in
\eqref{eq:nonlinear-corrected-error-equation} motivates enriching the
projection space by \(\mathcal V_{\rm disc}^k(\mathcal I^k)\).

\section{Numerical experiments}
\label{sec:numerics}

In this section, we present numerical experiments for Fourier-aware
projection-based PP-PC (\cref{alg:fourier-aware-pppc}; \emph{Fourier-aware
PP-PC} for short).  The experiments
serve three purposes:
\begin{enumerate}[label=\textup{(\roman*)}]
	\item to compare the outer-iteration convergence and projection-space
	dimensions of the proposed iteration with PP-PC
	\cite{gander2013periodicParareal} and Krylov-enhanced PP-PC
	\cite{song2024krylovpppc} on two linear and two nonlinear time-periodic
	problems;
	\item to evaluate the linear tail--leak estimate of
	\cref{thm:linear-tail-leak-estimate} and illustrate, with frozen projection
	spaces, the roles of the unresolved-error norm and cyclic stability factor in
	\cref{thm:projection-level-linear-estimate};
	\item to diagnose the three unweighted
	projection-dependent factors that enter the nonlinear one-step numerator in
	\cref{lem:local-nonlinear-onestep} and to evaluate the nonlinear tail--leak
	estimate of \cref{thm:fourier-aware-nonlinear} using diagnostic estimates of
	the local constants.
\end{enumerate}

Relative to the fine periodic reference
\(\mathbf u^\star\) of \cref{sec:analysis}, the reported error is
\(E_\infty^k=\max_{0\le n<N}
\norm{\mathbf u_n^\star-\mathbf U_n^k}_2\).
Each run starts from the periodic coarse solution \(\mathbf U^0\) and uses the
problem-specific outer-iteration budget \(K\).  All periodic
solves are direct in the linear tests and use damped Newton shooting in the
nonlinear tests, with a full-period residual tolerance of \(10^{-13}\).
Newton Jacobians are assembled by forward differences with componentwise
steps \(10^{-7}\max\{1,|x_j|\}\).  All experiments were carried out in MATLAB
R2024a on a laptop with an Intel Core i5-13500H CPU at 3.19 GHz and 32 GB RAM.

\subsection{Linear problems}
\label{subsec:numerics-linear}

Both problems have period \(T=1\) and homogeneous Dirichlet conditions on
\(x\in(0,1)\).  We use second-order centered differences with
\(\Delta x=1/32\); the first-order wave and realified Schr\"odinger systems
both have dimension \(d=62\).  On each coarse time interval, \(\cF\) uses
\(32\) Crank--Nicolson steps and \(\cG\) one backward Euler step.

The \emph{wave problem} is
\begin{equation}
  \label{eq:numerics-wave}
  \partial_{tt}u=\partial_{xx}u+f(x,t),
  \qquad
  f(x,t)=-\left(4\pi^2x(x-1)+2\right)\sin(2\pi t),
\end{equation}
for which a time-periodic solution is \(u(x,t)=x(x-1)\sin(2\pi t)\).  We use
\(N=16\), \(K=8\), and the fixed mode set
\(\mathcal I_1=\{0,1,N-1\}\), which contains the known frequency pair.
At \(T=1\), periodic even spatial modes make the continuous problem degenerate,
while the odd-mode forcing admits the displayed solution.  The discrete fine
period map is nevertheless nondegenerate: the semidiscrete
frequency closest to \(2\pi\) is \(64\sin(\pi/32)\approx6.273\ne2\pi\), and its nearest multiplier is about
\(10^{-2}\) from \(1\).

The \emph{forced Schr\"odinger problem} is
\begin{equation}
  \label{eq:numerics-schrodinger}
  \imath\,\partial_t\psi=-\partial_{xx}\psi+f(x,t),
\end{equation}
with periodic solution
\[
  \psi(x,t)
  =
  \phi_1(x)e^{2\pi\imath t}
  +0.35\,\phi_2(x)e^{-4\pi\imath t},
\]
where \(\phi_1(x)=x(1-x)\) and
\(\phi_2(x)=x(1-x)(1+0.25\sin\pi x)\).  Writing \(\psi_h\) for its grid
samples and \(L_h\) for the centered-difference Laplacian, we set
\(f_h=\imath\,\dot\psi_h+L_h\psi_h\), so that \(\psi_h\) solves the
semi-discrete system exactly.  We use \(N=64\), \(K=16\), and, treating the
frequencies as unknown, apply the adaptive increment-energy rule of
\cref{subsec:mode-selection} with a budget of five conjugate groups
(\(\lvert\mathcal I^k\rvert\le10\)).

\paragraph{Convergence comparison}
\Cref{fig:linear-convergence,tab:linear-summary} show that
Fourier-aware PP-PC reaches \(E_\infty^k\le10^{-10}\) first: at \(k=8\) for
the wave problem and \(k=3\) for the Schr\"odinger problem, versus \(k=15\)
and \(k=8\) for Krylov-enhanced PP-PC; PP-PC reaches neither threshold within
the respective budgets.  The wave panel extends to \(k=15\) for the later
Krylov crossing; the Schr\"odinger panel stops at \(k=7\).  The table gives each
nominal run's final error, threshold iteration, maximal dimension, and mode
count.  A dash marks no threshold hit within \(K\) or no Fourier selection, as
appropriate.  Both projection spaces use the same relative singular-value
tolerance \(10^{-12}\).

\begin{figure}[b!]
  \centering
  \includegraphics[width=0.48\textwidth]{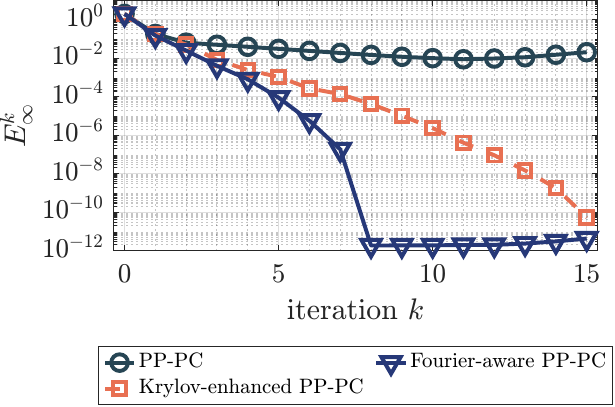}
  \hfill
  \includegraphics[width=0.48\textwidth]{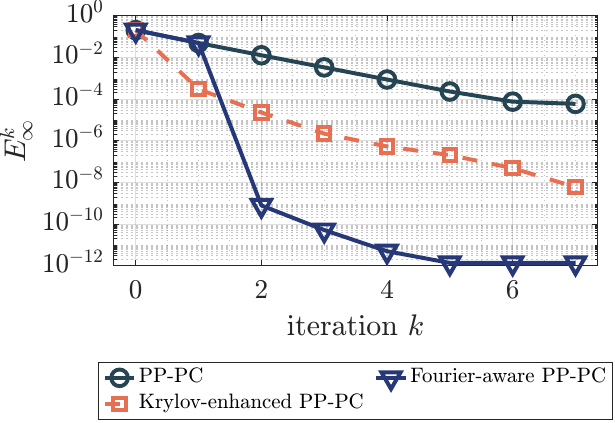}
  \caption{Error histories \(E_\infty^k\) of PP-PC, Krylov-enhanced PP-PC,
  and Fourier-aware PP-PC for the wave problem \eqref{eq:numerics-wave}
  (left, shown through \(k=15\)) and the forced
  Schr\"odinger problem \eqref{eq:numerics-schrodinger}
  (right, shown through \(k=7\)).}
  \label{fig:linear-convergence}
\end{figure}

\begin{table}[b!]
  \caption{Convergence and projection-space dimension for the two linear
  problems.}
  \label{tab:linear-summary}
  \centering
  \footnotesize
  \begin{tabular}{llcccc}
    \toprule
    Problem & Method & \(E_\infty^{K}\) &
    \(E_\infty^k\le10^{-10}\) & \(\max_k\dim\) & \(|\mathcal I^k|\) \\
    \midrule
    Wave \((K=8)\) & PP-PC            & 1.5e-02 & --  & 0   & -- \\
                   & Krylov-enhanced PP-PC & 4.2e-05 & --  & 16  & -- \\
                   & Fourier-aware PP-PC   & 1.8e-12 & 8   & 32  & 3 \\
    \midrule
    Schr\"odinger \((K=16)\) & PP-PC            & 4.4e-04 & --  & 0   & -- \\
                             & Krylov-enhanced PP-PC & 1.0e-13 & 8   & 32  & -- \\
                             & Fourier-aware PP-PC   & 1.4e-12 & 3   & 30  & \(\le\)10 \\
    \bottomrule
  \end{tabular}
\end{table}

\paragraph{Evaluation of the tail--leak estimate}

We evaluate \cref{thm:linear-tail-leak-estimate} along the Fourier-aware
iterations.  For its cyclic factor
\(\Gamma_k=\norm{\widehat{\cC}_{P_{\mathcal I^k}}^{-1}}_{\infty\to\infty}\),
we use the computable block-row majorant
\begin{equation}
\label{eq:block-row-majorant}
\Gamma_k\le\overline\Gamma_k:=\max_{0\le n<N}\sum_{m=0}^{N-1}
\norm{\bigl(\widehat{\cC}_{P_{\mathcal I^k}}^{-1}\bigr)_{nm}}_2.
\end{equation}
At the fixed discretization, the affine discrepancy has uniform Lipschitz
constant \(\delta_{FG}:=\max_n\norm{F_n-G_n}_2\), where \(F_n\) and \(G_n\)
are the linear parts of \(\cF_n\) and \(\cG_n\).  Applying the estimate with
these computable quantities gives
\begin{equation}
  \label{eq:numerics-evaluated-bound}
  B_{k+1}
  :=
  \overline\Gamma_k\,\delta_{FG}\,
  \frac{\Tail_{\mathcal I^k}\left(\mathbf e^k\right)
        +\Leak_{\mathcal I^k}^{P_{\mathcal I^k}}\left(\mathbf e^k\right)}
       {\sqrt N}.
\end{equation}
Thus \(E_\infty^{k+1}\le B_{k+1}\) in exact arithmetic.  The tail and leak use
the measured error history \(\mathbf e^k\), so \(B_{k+1}\) is a posteriori.
The estimate therefore applies to the Crank--Nicolson wave propagator without
a fine-propagator contraction assumption.

\begin{table}[b!]
  \caption{Tail--leak values and the evaluated bound for the two linear
  problems.}
  \label{tab:linear-tail-leak}
  \centering
  \begin{tabular}{llcccccc}
    \toprule
    & & \multicolumn{4}{c}{Fourier-aware}
    & \multicolumn{2}{c}{Krylov-enhanced} \\
    \cmidrule(lr){3-6}\cmidrule(lr){7-8}
    Problem & \(k\) & \(\Tail\) & \(\Leak\) & \(B_{k+1}\) & \(E_\infty^{k+1}\)
    & \(\Tail\) & \(\Leak\) \\
    \midrule
    Wave & 0 & 4.4e-13 & 3.8e-01 & 4.9e+02 & 1.5e-01 & 0 & 8.0e-01 \\
         & 2 & 5.0e-13 & 3.2e-03 & 7.0e+01 & 4.1e-03 & 0 & 1.6e-02 \\
         & 4 & 1.1e-12 & 2.7e-05 & 1.6e+00 & 9.0e-05 & 0 & 1.2e-03 \\
         & 6 & 9.0e-12 & 2.5e-08 & 1.1e-03 & 1.8e-07 & 0 & 8.6e-05 \\
         & 7 & 8.5e-12 & 7.5e-12 & 4.4e-07 & 1.8e-12 & 0 & 2.1e-05 \\
    \midrule
    Schr\"odinger & 0 & 2.3e+00 & 0 & 1.1e+01 & 5.0e-02 & 0 & 2.8e-03 \\
                  & 1 & 2.0e-13 & 7.8e-09 & 3.7e-07 & 7.9e-10 & 0 & 2.3e-04 \\
                  & 2 & 1.6e-13 & 4.5e-10 & 2.1e-08 & 5.2e-11 & 0 & 2.3e-05 \\
                  & 3 & 1.6e-13 & 3.6e-11 & 1.7e-09 & 5.0e-12 & 0 & 5.3e-06 \\
    \bottomrule
  \end{tabular}
\end{table}

\begin{figure}[b!]
  \centering
  \includegraphics[width=0.48\textwidth]{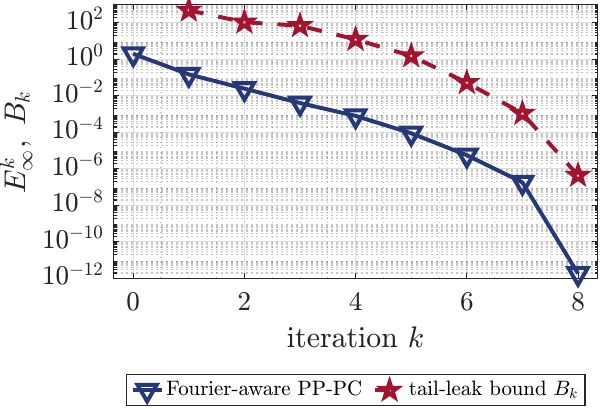}
  \hfill
  \includegraphics[width=0.48\textwidth]{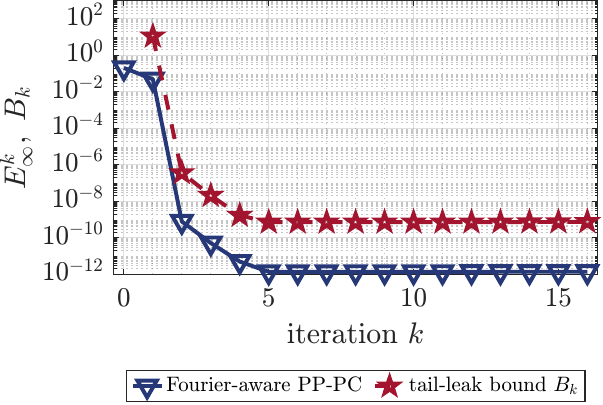}
  \caption{Measured error \(E_\infty^k\) of the Fourier-aware iterations and the
  evaluated tail--leak bound \(B_k\) of \eqref{eq:numerics-evaluated-bound}
  for the wave (left) and Schr\"odinger (right) problems; \(B_0\) is not
  defined.}
  \label{fig:linear-bound}
\end{figure}

\Cref{tab:linear-tail-leak,fig:linear-bound} show that \(B_k\) lies above
\(E_\infty^k\) at every computed step and follows the initial error decay.  The
bound is not sharp, remaining roughly two to five orders above the measured
error.  The prefactor \(\overline\Gamma_k\delta_{FG}\) contributes to this
slack.  The block-row majorant \(\overline\Gamma_k\) ranges over
\(8\times10^1\)--\(4\times10^3\) for the wave problem and
\(3\times10^1\)--\(3\times10^2\) for the Schr\"odinger problem, while
\(\delta_{FG}\approx63\) and \(1.2\), respectively.  The larger wave values
of \(\overline\Gamma_k\) are consistent with its near-resonant discretization.

The tail and leak columns separate the two contributions to the
unresolved-error bound.  For the fixed-band wave problem, the Fourier-aware
tail stays at roundoff level and the leak decays.  For the adaptive
Schr\"odinger problem, \(\mathcal I^0\) is empty, so all initial error modes
contribute to the tail.  After the first selection, the tail falls to roundoff
level and the leak decays.  For comparison, the table also
reports Krylov-enhanced diagnostics in the all-mode representation of
\cref{rem:krylov-all-modes}; its diagnostic tail is zero, but its leak need
not be.  Thus a small tail alone does not make the tail--leak bound small.
\paragraph{A posteriori test of projection dependence}

For a fixed \(P\), \cref{thm:projection-level-linear-estimate}
identifies two projection-dependent quantities in the next-step error bound:
the unresolved-error norm \(\max_{0\le n<N}\norm{Q\mathbf e_n^k}_2\), with
\(Q=\Id-P\), and the cyclic stability factor \(\Gamma\) associated with
\(\widehat{\cC}_P\).  To examine these quantities a
posteriori, we use two frozen projection spaces.  We construct \(P_F\) from
the mixed Fourier space and \(P_K\) from
the solution-snapshot space, using their respective histories through the same
iteration \(k_\ast\).  For each problem, \(k_\ast\) is the first iteration at
which Fourier-aware PP-PC satisfies \(E_\infty^{k_\ast}\le10^{-10}\).  We
restart the same discrepancy-form update \eqref{eq:projection-corrected-pppc}
from \(\mathbf U^0\) with each projection fixed, so only \(P\) differs.
\Cref{tab:linear-posterior} reports \(\max_n\norm{Q\mathbf e_n^0}_2\), the
corresponding block-row majorant \(\overline\Gamma(P)\) obtained from
\eqref{eq:block-row-majorant}, and \(E_\infty^1\); \cref{fig:linear-posterior}
shows the subsequent fixed-projection histories.

\begin{table}[b!]
  \caption{Frozen-projection diagnostics for the two linear problems.}
  \label{tab:linear-posterior}
  \centering
  \begin{tabular}{@{}llccc@{}}
    \toprule
    Problem & Projection & \(\max_n\norm{Q\mathbf e_n^0}_2\) &
    \(\overline\Gamma(P)\) & \(E_\infty^1\) \\
    \midrule
    Wave & \(P_F\) & 1.8e-12 & 1.76e+03 & 1.8e-12 \\
         & \(P_K\) & 6.7e-07 & 2.41e+03 & 1.0e-05 \\
    \midrule
    Schr\"odinger & \(P_F\) & 2.8e-12 & 3.08e+02 & 4.4e-12 \\
                  & \(P_K\) & 4.7e-07 & 3.08e+02 & 5.3e-07 \\
    \bottomrule
  \end{tabular}
\end{table}

\begin{figure}[b!]
  \centering
  \includegraphics[width=0.48\textwidth]{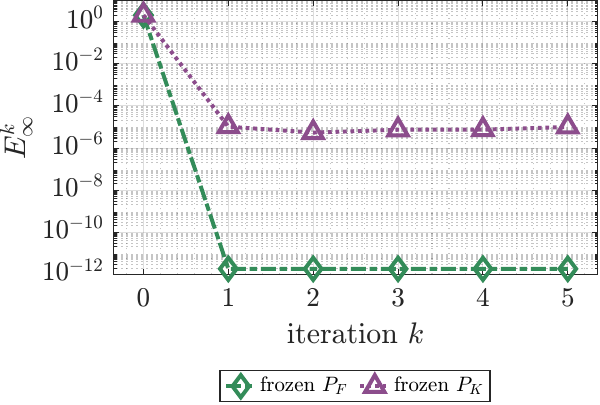}
  \hfill
  \includegraphics[width=0.48\textwidth]{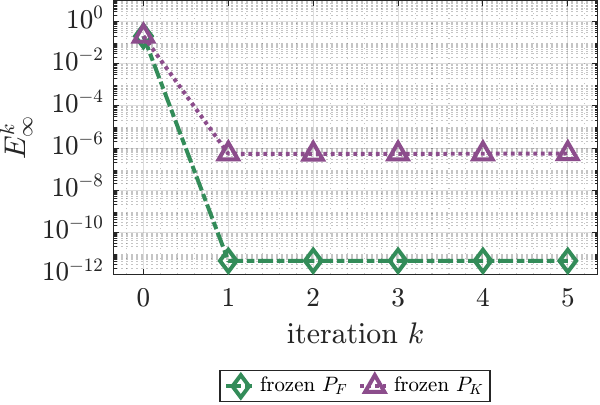}
  \caption{Error histories over five iterations after restarting
  projection-based PP-PC from \(\mathbf U^0\) with the discrepancy-form
  propagator and frozen projections \(P_F\) (mixed Fourier) and \(P_K\)
  (solution-snapshot) for the wave (left) and Schr\"odinger (right) problems.}
  \label{fig:linear-posterior}
\end{figure}

Within each problem, the computed cyclic majorants are comparable, whereas the
unresolved initial error for \(P_F\) is more than five orders of magnitude
smaller than for \(P_K\).  Consistently, \(P_F\) produces one-step errors of
order \(10^{-12}\), while \(P_K\) produces errors of order \(10^{-5}\) and
\(10^{-7}\) for the wave and Schr\"odinger problems, respectively.  The
subsequent fixed-projection histories preserve this separation.  Within the
evaluated estimate, the unresolved factor therefore provides the dominant
separation between the two projections.  Thus, in these tests,
the reported Fourier-aware construction yields a more refined projection space
for the error-relevant components.

\subsection{Nonlinear problems}
\label{subsec:numerics-nonlinear}

Both nonlinear problems have period \(T=1\) and periodic boundary conditions
on \([0,1)\).  We use second-order centered differences on \(32\) grid points
(\(d=64\)), \(N=16\), and \(K=8\).  The fine propagator is classical
fourth-order Runge--Kutta with \(\delta t=1/512\).  Writing the semi-discrete
system as \(\mathbf w'=A\mathbf w+\mathbf g(\mathbf w,t)\), the coarse
propagator is an implicit--explicit (IMEX) Euler step
\cite{ascher1997imexRK,ascher1995imex}
\begin{equation}
  \label{eq:numerics-semi-implicit}
  \cG_n(\mathbf w)
  =
  \left(\Id-\Delta T\,A\right)^{-1}
  \left(\mathbf w+\Delta T\,\mathbf g(\mathbf w,T_{n+1})\right),
  \qquad \Delta T=1/16.
\end{equation}
Thus the stiff linear part is implicit and the nonlinear part explicit, with
one fixed-matrix solve per coarse step.

The first problem is the periodically forced \emph{Stuart--Landau}
reaction--diffusion system \cite{aranson2002cgl}.  For \(w=p+\imath q\),
\begin{equation}
  \label{eq:numerics-stuart-landau}
  \begin{aligned}
    \partial_t w
    &=
    D\,\partial_{xx}w+(\lambda+\imath\omega)w
    -(1+\imath\beta)|w|^2w+f(x,t),\\
    f(x,t)
    &=
    0.5\left(\sin2\pi x+0.35\sin4\pi x\right)\cos2\pi t,
  \end{aligned}
\end{equation}
where \(D=0.05\), \(\lambda=0.5\), \(\omega=2\pi\), and \(\beta=1\).

The second problem is the periodically forced \emph{Brusselator}
\cite{prigogine1968brusselator}, with \(a=1\), \(b=3\),
\(D_1=10^{-2}\), \(D_2=5\times10^{-3}\), and the same forcing \(f\):
\begin{equation}
  \label{eq:numerics-brusselator}
  \begin{aligned}
    \partial_t u
    &=
    D_1\,\partial_{xx}u+a-(b+1)u+u^2v+f(x,t),\\
    \partial_t v
    &=
    D_2\,\partial_{xx}v+bu-u^2v.
  \end{aligned}
\end{equation}
Although the forcing contains only the temporal frequency \(\ell=1\), the
nonlinear terms generate higher harmonics.  Both tests therefore use the fixed
7-mode band
\(\mathcal I_3=\{0,1,2,3,N-3,N-2,N-1\}\).

The two problems exercise the two stability mechanisms evaluated below.  For
Stuart--Landau, the measured linearization of \(\cH_{P^k,n}\) along the fine
reference is contractive, so its evaluated bound uses the loop-gain
specialization associated with
\cref{rem:contraction-recovers-nondegeneracy}.  For the Brusselator, the
corresponding linearization is noncontractive, so its evaluated bound uses the
general cyclic-inverse form.  The local nondegeneracy condition of
\cref{ass:local-nonlinear-nondegeneracy} and the smallness condition
\eqref{eq:nonlinear-smallness} are examined below.  The
cyclic-inverse analysis therefore covers noncontractive settings in which the
linearized cyclic operator remains invertible, beyond the contraction-based
stability mechanism used in earlier quantitative nonlinear PP-PC estimates
\cite{gander2013periodicParareal}.

\paragraph{Convergence comparison}

\Cref{fig:nonlinear-convergence,tab:nonlinear-summary} show
that Fourier-aware PP-PC reaches \(E_\infty^k\le10^{-10}\) first, at \(k=2\)
for Stuart--Landau and \(k=4\) for the Brusselator, and is the only method to
do so in both tests within \(K=8\).  Under the same rank-revealing procedure,
its maximal projection-space dimension is smaller than that of
Krylov-enhanced PP-PC in both tests.  The panels show the histories through
\(k=6\), while the table summarizes the complete \(K=8\) runs by their final
errors, threshold iterations, maximal dimensions, and mode counts.

\begin{figure}[t!]
  \centering
  \includegraphics[width=0.48\textwidth]{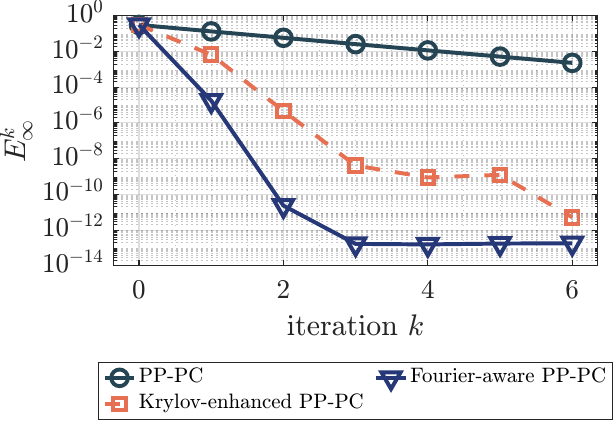}
  \hfill
  \includegraphics[width=0.48\textwidth]{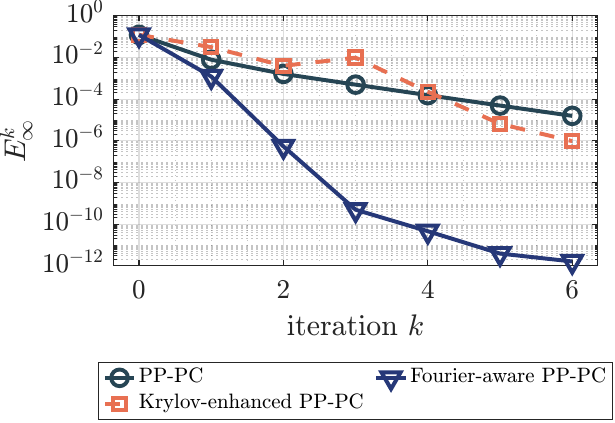}
  \caption{Error histories \(E_\infty^k\) of PP-PC,
  Krylov-enhanced PP-PC, and Fourier-aware PP-PC for the forced Stuart--Landau problem
  \eqref{eq:numerics-stuart-landau} (left) and the forced Brusselator
  problem \eqref{eq:numerics-brusselator} (right),
  shown through \(k=6\).}
  \label{fig:nonlinear-convergence}
\end{figure}

\begin{table}[t!]
  \caption{Convergence and projection-space dimension for the two nonlinear
  problems.}
  \label{tab:nonlinear-summary}
  \centering
  \footnotesize
  \begin{tabular}{@{}llcccc@{}}
    \toprule
    Problem & Method & \(E_\infty^{K}\) &
    \(E_\infty^k\le10^{-10}\) & \(\max_k\dim\) & \(|\mathcal I^k|\) \\
    \midrule
    Stuart--Landau \((K=8)\) & PP-PC                 & 4.5e-04 & --  & 0   & -- \\
                             & Krylov-enhanced PP-PC & 6.2e-13 & 6   & 23  & -- \\
                             & Fourier-aware PP-PC   & 1.8e-13 & 2   & 17  & 7 \\
    \midrule
    Brusselator \((K=8)\) & PP-PC                 & 1.7e-06 & --  & 0   & -- \\
                          & Krylov-enhanced PP-PC & 4.1e-09 & --  & 51  & -- \\
                          & Fourier-aware PP-PC   & 2.9e-13 & 4   & 36  & 7 \\
    \bottomrule
  \end{tabular}
\end{table}

\paragraph{Projection-level decomposition of the local one-step factor}
For \(P=P_{\mathcal I^k}\), set
\(Q=\Id-P_{\mathcal I^k}\) and define
\(m_Q^k:=\max_{0\le n<N}\norm{Q\mathbf e_n^k}_2\) and
\(m_P^k:=\max_{0\le n<N}\norm{P_{\mathcal I^k}\mathbf e_n^k}_2\); abbreviate
\(U_k^\perp:=U_k^\perp(P_{\mathcal I^k})\).
For the Fourier-aware projection, these definitions and
\cref{lem:tail-leak-projected-history} give
\[
  \begin{aligned}
  m_Q^k
  &=E_\infty^k\theta_Q^k(P_{\mathcal I^k})
  \le \widetilde m_Q^k
  :=\frac{\Tail_{\mathcal I^k}(\mathbf e^k)
  +\Leak_{\mathcal I^k}^{P_{\mathcal I^k}}(\mathbf e^k)}{\sqrt N}
  =E_\infty^k\Theta_Q^k,\\
  m_P^k&=E_\infty^k\Theta_P^k.
  \end{aligned}
\]
Thus, after multiplication by \(E_\infty^k\), the factor in
\cref{lem:local-nonlinear-onestep} has the three numerator contributions
\[
  \mu_k\,m_Q^k,
  \qquad
  \nu_k\,U_k^\perp m_P^k,
  \qquad
  \frac{\nu_k}{2}m_Q^k m_P^k.
\]
\Cref{tab:nonlinear-decomposition} reports this majorant, the three
unweighted factors, and the resulting \(E_\infty^{k+1}\).

In both problems, \(m_Q^k\) is the largest unweighted factor.
The measured weights preserve this dominance.  The discrepancy-curvature
factor \(m_Q^km_P^k\) is smaller and quadratic in the error, while
\(U_k^\perp m_P^k\) is negligible.  Over the reported steps,
\(\Theta_Q^k=\widetilde m_Q^k/E_\infty^k\le4.93\times10^{-2}\), so the
normalized tail--leak sum is small.  Since \(\widetilde m_Q^k\) majorizes the dominant unresolved
factor, these data provide a posteriori evidence that a small tail together
with a small leak is associated with the observed rapid next-step decay; the
complete estimate is evaluated below.

\paragraph{Evaluation of the nonlinear tail--leak estimate}
Thus \cref{thm:fourier-aware-nonlinear} replaces \(m_Q^k\) by its
tail--leak majorant \(\widetilde m_Q^k\) in the unresolved-error and
discrepancy-curvature contributions, while leaving the base-state contribution
unchanged.

\begin{table}[H]
  \caption{Measured tail--leak majorant, projection-level factors, and
  next-step error.}
  \label{tab:nonlinear-decomposition}
  \centering
  \small
  \begin{tabular}{lccccccc}
    \toprule
    Problem & \(k\) & \(E_\infty^k\) &
    \(\widetilde m_Q^k\) &
    \(m_Q^k\) &
    \(U_k^\perp m_P^k\) & \(m_Q^k m_P^k\) &
    \(E_\infty^{k+1}\) \\
    \midrule
    Stuart--Landau & 0 & 3.1e-01 & 6.8e-05 & 6.7e-05 & 4.9e-11 & 2.1e-05 & 1.8e-05 \\
                   & 1 & 1.8e-05 & 6.2e-10 & 1.6e-10 & 7.5e-17 & 2.8e-15 & 2.4e-11 \\
                   & 2 & 2.4e-11 & 1.2e-12 & 9.4e-13 & 2.1e-23 & 2.2e-23 & 1.7e-13 \\
    \midrule
    Brusselator & 0 & 1.2e-01 & 3.5e-03 & 3.4e-03 & 1.2e-09 & 4.2e-04 & 1.2e-03 \\
                & 1 & 1.2e-03 & 1.5e-06 & 1.4e-06 & 7.0e-12 & 1.8e-09 & 5.4e-07 \\
                & 2 & 5.4e-07 & 2.2e-09 & 2.0e-09 & 2.7e-17 & 1.1e-15 & 4.9e-10 \\
    \bottomrule
  \end{tabular}
\end{table}

The corresponding diagnostic weighted sum is
\[
  \mathcal S_{k,\mathrm F}
  :=\hat\mu\,\widetilde m_Q^k
  +\hat\nu\,U_k^\perp m_P^k
  +\frac{\hat\nu}{2}\widetilde m_Q^k m_P^k.
\]

For Stuart--Landau, the computed finite-difference (FD) reference-Jacobian norm
\[
  \hat\alpha_k
  :=\max_n\norm{J_{\cH_{P_{\mathcal I^k},n}}^{\rm FD}
  (\mathbf u_n^\star)}_2
  \approx0.956<1,
\]
giving the loop-gain specialization
\begin{equation}
  \label{eq:numerics-estimated-nl-bound}
  B_{k+1,\mathrm F}^{\mathrm{nl}}
  :=\frac{\mathcal S_{k,\mathrm F}}
  {(1-\hat\alpha_k)-\tfrac12\hat\kappa_kR}.
\end{equation}
For the Brusselator, whose reference linearization is noncontractive, we use
the general cyclic-inverse form
\begin{equation}
  \label{eq:numerics-nk-nl-bound}
  B_{k+1,\mathrm F}^{\mathrm{nl}}
  :=\frac{\overline{\widehat\Gamma}_k}
  {1-\tfrac12\overline{\widehat\Gamma}_k\hat\kappa_kR}
  \mathcal S_{k,\mathrm F}.
\end{equation}

Here \(\overline{\widehat\Gamma}_k\) is defined analogously by
the block-row formula in \eqref{eq:block-row-majorant}, using the assembled
finite-difference cyclic matrix.  With exact Jacobians, the same construction majorizes
\(\widehat\Gamma_k=\norm{\widehat{\cC}_{P_{\mathcal I^k}}^{-1}}_{\infty\to\infty}\);
the prefactor is increasing in this norm while its denominator is positive,
so the majorization preserves the exact bound.

We take \(R=E_\infty^0\).  The Stuart--Landau denominator is positive, while
for the Brusselator \(\overline{\widehat\Gamma}_k\in[62,66]\) and
\(\tfrac12\overline{\widehat\Gamma}_k\hat\kappa_kR
\in[0.50,0.66]\); moreover,
\(E_\infty^{k+1}\le R\) holds throughout both runs.  All
diagnostic Jacobians use central differences with step
\(10^{-6}(1+\norm{\mathbf x}_\infty)\).  Those at \(\mathbf u^\star\) give
\(\hat\alpha_k\) and the cyclic matrix used to form
\(\overline{\widehat\Gamma}_k\), while \(\hat\mu\) maximizes
discrepancy-Jacobian norms at \(\mathbf u^\star\) and \(\mathbf U^0\).  The
estimate \(\hat\nu\) uses Jacobian difference quotients between these two
histories and along two directions generated with seed \(0\) and scaled to
radius \(E_\infty^0\) at each time point; the same directional sampling of the
Jacobians of \(\cH_{P_{\mathcal I^k},n}\) gives the sampled curvature
\(\hat\kappa_k\).
These finite-difference and sampled quantities are diagnostic estimates rather
than certified bounds for the exact local constants.

\Cref{fig:nonlinear-bound} shows that both evaluated estimates lie above the
realized errors and follow their decay.  Relative to the projection-level
estimate, the tail--leak majorization adds, over the evaluated iterations, at
most factors \(3.97\) and \(1.45\) for Stuart--Landau and the Brusselator,
respectively.  Thus
\cref{tab:nonlinear-decomposition} isolates the local projection mechanism,
while \cref{fig:nonlinear-bound} evaluates the corresponding
Fourier tail--leak estimate of \cref{thm:fourier-aware-nonlinear}, including the noncontractive
regime.

\begin{figure}[t!]
  \centering
  \includegraphics[width=0.48\textwidth]{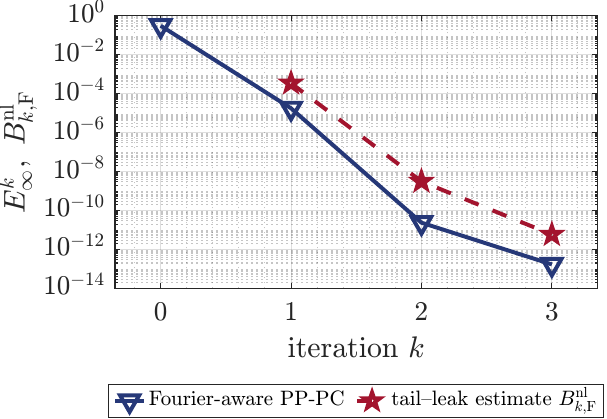}
  \hfill
  \includegraphics[width=0.48\textwidth]{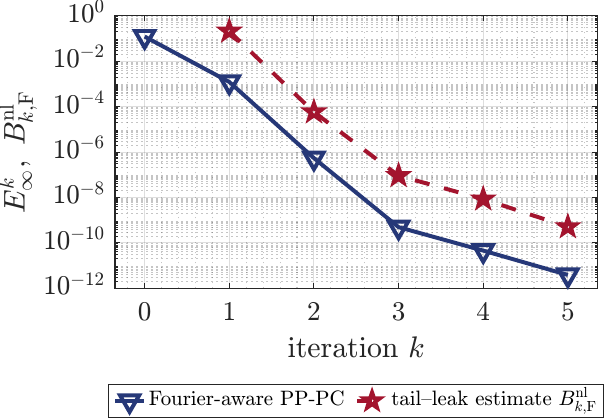}
  \caption{Measured errors \(E_\infty^k\) and evaluated nonlinear tail--leak
  estimates \(B_{k,\mathrm F}^{\mathrm{nl}}\) of
  \cref{thm:fourier-aware-nonlinear}.  Left: the loop-gain specialization
  \eqref{eq:numerics-estimated-nl-bound} for Stuart--Landau.  Right: the
  cyclic-inverse form \eqref{eq:numerics-nk-nl-bound} for the Brusselator.
  The value \(B_{0,\mathrm F}^{\mathrm{nl}}\) is not defined.}
  \label{fig:nonlinear-bound}
\end{figure}

\subsection{Summary and discussion}
\label{subsec:numerics-summary}

\Cref{tab:linear-summary,tab:nonlinear-summary} show that Fourier-aware PP-PC
reaches \(E_\infty^k\le10^{-10}\) in the fewest outer iterations in all four
examples.  Its maximal projection-space dimension is smaller than that of
Krylov-enhanced PP-PC in three examples.  For the wave problem it is larger,
but the tolerance is reached at \(k=8\) rather than \(k=15\)
(\cref{fig:linear-convergence}).  The frozen-projection experiment illustrates
the dependence on \(P\) in \cref{thm:projection-level-linear-estimate}: at the
same history depth, the mixed Fourier spaces leave smaller unresolved errors
and produce smaller one-step errors under comparable computed cyclic majorants
(\cref{tab:linear-posterior,fig:linear-posterior}).

For the linear problems, the measured tail--leak quantities and evaluated bound
are consistent with \cref{thm:linear-tail-leak-estimate}
(\cref{tab:linear-tail-leak,fig:linear-bound}).  For the nonlinear problems,
\cref{tab:nonlinear-decomposition} identifies \(m_Q^k\) as the largest
unweighted projection-level factor.  Over the reported steps,
\(\Theta_Q^k\le4.93\times10^{-2}\).  \Cref{fig:nonlinear-bound} evaluates the
nonlinear tail--leak estimate using diagnostic estimates of
the local constants for the Stuart--Landau and Brusselator
cases, whose computed finite-difference reference linearizations are contractive
and noncontractive, respectively.  Taken
together, these results indicate that the mixed Fourier construction yields a
more refined projection space in the reported tests: it is better aligned with
error-relevant directions but not always smaller.

\section{Conclusion and future work}
\label{sec:conclusion}

We introduced Fourier-aware projection-based PP-PC, a
projection-based periodic \emph{parareal} method that combines a
discrepancy-based correction scheme with a Fourier-aware projection space.
We developed a convergence analysis for general nonlinear time-periodic
problems, in which a temporal tail--leak bound controls both the
unresolved-error term and the nonlinear remainder in a local one-step estimate,
and showed that the
linear case reduces to global bounds under weaker
assumptions.  We demonstrated on linear and nonlinear problems that
Fourier-aware PP-PC converges faster than Krylov-enhanced PP-PC, with the
measured errors consistent with the analysis.

Our comparisons measure outer-iteration convergence, as in
\cite{gander2013periodicParareal,song2024krylovpppc}.  Since the corrected
periodic solve is more expensive per iteration than the coarse solve of the
original PP-PC, a work-normalized timing study and dedicated fast solvers for it are
subjects of further study.

\newpage
\section*{Acknowledgments}
This work was supported by the National Key R\&D Program of China under Grant
Nos. 2020YFA0711900 and 2020YFA0711902.  During the preparation of this
manuscript, the authors used OpenAI's ChatGPT to improve its readability and assist
with the design of code for the numerical experiments.  The authors assume
responsibility for all content.

\bibliographystyle{siamplain}
\bibliography{references}

\end{document}